\documentclass[a4paper,12pt,reqno]{amsart} 

\makeatletter
\tagsleft@false 
\makeatother

\usepackage{comment}

\usepackage{tikz}
\usepackage{pgfplots}
\pgfplotsset{compat=1.18}
\usepackage{longtable}
\usepackage{amsmath}
\usepackage{amssymb}
\usepackage{amsthm}
\usepackage{mathrsfs}
\usepackage{ifthen}
\usepackage{graphicx}
\usepackage[T1]{fontenc}

\numberwithin{equation}{section}

\newtheorem*{theorem*}{Theorem}

\newtheorem{thm}{Theorem}[section]
\newtheorem{lem}{Lemma}[section]

\newtheorem*{cor*}{Corollary}

\theoremstyle{definition}

\newtheorem{prob}{Problem}[section]

\newtheorem{cor}{Corollary}[section]

\newcounter{minutes}
\divide\time by 60
\newcounter{hours}
\multiply\time by 60
\addtocounter{minutes}{-\time}

\newcounter{own}
\def\theown{\thesection.\arabic{own}}

\newenvironment{pf}[1][]{%
	\vskip 3mm
	\noindent
	\ifthenelse{\equal{#1}{}}%
	{{\slshape Proof. }}%
	{{\slshape #1.} }%
}%
{\qed\bigskip}

\newcounter{alphabet}

\def\be{\begin{equation}}
	\def\ee{\end{equation}}

\newcommand{\bee}{\begin{enumerate}}
	\newcommand{\eee}{\end{enumerate}}

\newcommand{\blem}{\begin{lem}}
	\newcommand{\elem}{\end{lem}}
\newcommand{\bthm}{\begin{thm}}
	\newcommand{\ethm}{\end{thm}}
\newcommand{\bcor}{\begin{cor}}
	\newcommand{\ecor}{\end{cor}}
\newcommand{\eeg}{\end{examp}}

\newcommand{\eegs}{\end{examples}}
\newcommand{\edefe}{\end{defin}}
\newcommand{\bprob}{\begin{prob}}
\newcommand{\eprob}{\end{prob}}
\newcommand{\bei}{\begin{itemize}}
\newcommand{\eei}{\end{itemize}}

\usepackage{xcolor}

\begin{document}

\title{{The second and third Hankel determinants for certain classes of functions}}

\author{Vasudevarao Allu}
\address{Vasudevarao Allu,
Department of Mathematics,
School of Basic Sciences,
Indian Institute of Technology Bhubaneswar,
Bhubaneswar-752050, Odisha, India.}
\email{avrao@iitbbs.ac.in}

\author{Shobhit Kumar}
\address{Shobhit Kumar,
Department of Mathematics,
School of Basic Sciences,
Indian Institute of Technology Bhubaneswar,
Bhubaneswar-752050, Odisha, India.}
\email{a21ma09007@iitbbs.ac.in}

\subjclass{30C45, 30C50.}

\keywords{Analytic functions, Hankel determinants, Starlike function, Subordination }

\def\thefootnote{}
\footnotetext{
{ }
}
\makeatletter\def\thefootnote{\@arabic\c@footnote}\makeatother
\begin{abstract}
Let  $\mathcal{A}$ denote the class of analytic functions such that $f(0)=0$ and $f'(0)=1$ in the unit disk $\mathbb{D}:=\{z \in \mathbb{C}: |z|<1\}.$
In this paper, we consider  $\mathcal{S}^*(\varphi) := \left\{ f \in \mathcal{A} : zf'(z)/f(z) \prec \varphi(z):=(1+z/2)^2 \right\}$, a subclass of starlike functions and we compute the sharp second and third Hankel determinants for the functions in $\mathcal{S}^*(\varphi)$. Furthermore, we determine the extremal functions for the coefficient bounds of  the functions belonging to $\mathcal{S}^*(\varphi)$.
\end{abstract}
\maketitle
\pagestyle{myheadings}
\markboth{V. Allu and S. Kumar}{ The second and third Hankel determinants for certain classes of functions }

\section{Introduction}
Let $\mathcal{A}$ denote the class of normalized analytic functions $f(z)$ defined in the open unit disk $\mathbb{D} := \{ z \in \mathbb{C} : |z| < 1 \}$, with $f(0)=0$ and $f'(0)=1$. Then each $f \in \mathcal{A}$ has the representation
$
f(z) = z + \sum_{n=2}^{\infty} a_n z^n .
$
Let $\mathcal{S}$ be the subclass of $\mathcal{A}$ consisting of univalent (\textit{i.e.}, one-to-one) functions. Let $\mathcal{P}$ denote the class of functions with positive real part, having the series expansion
\[
p(z) = 1 + \sum_{n=1}^{\infty} p_n z^n .
\]
Let $f$ and $g$ be two analytic functions in the unit disk $%
\mathbb{D}$. Then $f$ is said to be subordinate to $g$, written as $f\prec g$
or $f(z)\prec g(z)$, if there exists a function $w(z)$, analytic in $%
\mathbb{D}$ with $w(0)=0$, $|w(z)|<1$ such that $f(z)=g(w(z))$ for $z\in%
\mathbb{D}$. Moreover, if $g$ is univalent in $\mathbb{D}$ and $f(0)=g(0)$,
then $f(\mathbb{D})\subseteq g(\mathbb{D})$.\\[2mm]
Let $\mathcal{S}^*$ be the subclass of $\mathcal{S}$ consisting of starlike functions that map $\mathbb{D}$ onto a starlike domain. Geometrically, $f \in \mathcal{S}^*$ if, and only if,
\[
\operatorname{Re}\, \!\left( \frac{z f'(z)}{f(z)} \right) > 0 \quad \mbox{for all } z \in \mathbb{D}.
\]
We define the class $\mathcal{S}^*(\varphi)$ such that
\[
\mathcal{S}^*(\varphi) := \left\{ f \in \mathcal{A} : \frac{z f'(z)}{f(z)} \prec \varphi(z) \right\},
\]
where $\varphi$ is analytic and univalent in $\mathbb{D}$, normalized by $\varphi(0) = 1$ and $\varphi'(0) > 0$, and symmetric with respect to the real axis.  For our investigation, we consider the specific choice $\varphi(z) = \left(1 + z/2 \right)^2$.\\[2mm]
In 1992, Ma and Minda~\cite{MaMinda1992} introduced a unified approach to the study of starlike and convex functions via subordination.\\[2pt]
A function $\psi$ is called a {Ma--Minda function} if it satisfies the following properties:
\begin{enumerate}
\item $\psi$ is analytic and univalent in the $\mathbb{D}$,
\item $\operatorname{Re}\, \psi(z)>0$ \mbox{for all} $z\in\mathbb{D}$,
\item $\psi(0)=1$ and $\psi'(0)>0$,
\item $\psi(\mathbb{D})$ is symmetric with respect to the real axis,
\item $\psi(\mathbb{D})$ is starlike with respect to $1$.
\end{enumerate}
Given such a $\psi$, the associated Ma--Minda subclasses are given by
\[
\mathcal{S}^*(\psi)=\left\{f\in\mathcal{S}:\ \frac{z f'(z)}{f(z)}\prec \psi(z)\right\},
\qquad
\mathcal{C}(\psi)=\left\{f\in\mathcal{S}:\ 1+\frac{z f''(z)}{f'(z)}\prec \psi(z)\right\}.
\]
By choosing specific forms of $\psi$, several well-known subclasses of $\mathcal{S}^*$ can be obtained, and many researchers have examined their geometric properties, radii of starlikeness, and coefficient bounds. In 1971, Janowski~\cite{Janowski1971} introduced the subclass
\[
\mathcal{S}^*[A,B] := \mathcal{S}^*\!\left( \frac{1 + Az}{1 + Bz} \right), \quad \text{where } -1 \leq B < A \leq 1,
\]
which provides a two-parameter family unifying many earlier examples. Building on this perspective, further notable contributions identified concrete Ma--Minda choices that yield certain well-known  subclasses: the class $\mathcal{S}_L^* := \mathcal{S}^*(\sqrt{1 + z})$ studied by Sokół and Stankiewicz~\cite{SokolStankiewicz1996}, and the cardioid-type class $\mathcal{S}_C^* := \mathcal{S}^*(1 + {4}/{3}z + {2}/{3}z^2)$ introduced by Sharma \textit{et al}.~\cite{SharmaJainRavichandran2016}. In the same vein, several other choices of $\psi$ have been investigated, the class $\mathcal{S}_e^* := \mathcal{S}^*(e^z)$ introduced by Mendiratta \textit{et al}.~\cite{MendirattaNagpalRavichandran2015}, the class  $\Delta^* := \mathcal{S}^*(z + \sqrt{1 + z^2})$ was  introduced by Raina \textit{et al}.~\cite{RainaSokol2015}, the class $\mathcal{S}_{SG}^* := \mathcal{S}^*\!\left(2/(1 + e^{-z})\right)$ considered by Goel and Kumar~\cite{GoelKumar2020}, and $\mathcal{S}_\wp^* := \mathcal{S}^*(1 + z e^z)$ investigated by Kumar and Kamaljeet~\cite{KumarKamaljeet2021}. These examples illustrate how specific analytic targets $\psi$ naturally organize families of starlike functions under subordination.\\[2mm]

\section{Certain properties of the class $\mathcal{S}^*(\varphi)$}
In this section we prove that $\varphi(z)$ satifies all the properties of Ma-Minda function.\\
It is immediate that $\varphi(0)=1$ and $\varphi'(0)>0$, and that $\varphi$ is symmetric with respect to the real axis.
To obtain the bounds, note that
\[
\left|\varphi(z)\right|=\left|1+\frac{z}{2}\right|^{2}.
\]
Since $|z|<1$, we have $1-{|z|}/{2}<\left|1+{z}/{2}\right|<1+{|z|}/{2}$, and hence
\begin{equation}\label{eq:2.1}
\frac{1}{4} \,< \left|\left(1+\frac{z}{2}\right)^{2}\right| \,< \frac{9}{4},
\qquad  z \in \mathbb{D}.
\end{equation}
Next we verify that $\varphi$ is univalent in $\mathbb{D}$. Assume that $\varphi(z_1)=\varphi(z_2)$. Then
\[
\left(1 + \frac{z_1}{2} \right)^2 = \left(1 + \frac{z_2}{2} \right)^2,
\]
so
\[
1 + \frac{z_1}{2} = \pm \left(1 + \frac{z_2}{2} \right).
\]
If $1 + {z_1}/{2} = 1 + {z_2}/{2}$, then $z_1=z_2$. If
\[
1 + \frac{z_1}{2} = -\left(1 + \frac{z_2}{2} \right),
\]
then
\[
1+\frac{z_1}{2}=-1-\frac{z_2}{2}
\]
Therefore, 
\[
z_1+z_2=-4,
\]
which is impossible for $z_1,z_2\in\mathbb{D}$ because $|z_1+z_2|\le |z_1|+|z_2|<2$.
Therefore $\varphi$ is univalent in $\mathbb{D}$.\\[2mm]
We now verify that $\varphi$ is starlike with respect to the point $\varphi(0)=1$. Since
\begin{equation}\label{eq:2.2}
\varphi'(z)=1+\frac{z}{2},
\qquad
\varphi(z)-1=\left(1+\frac{z}{2}\right)^2-1=z+\frac{z^2}{4}=z\left(1+\frac{z}{4}\right),
\end{equation}
we have
\[
\frac{z\,\varphi'(z)}{\varphi(z)-1}
=
\frac{z(1+z/2)}{z(1+z/4)}
=
\frac{1+z/2}{1+z/4}
=:M(z).
\]
A direct computation gives
\[
\frac{M(z)-1}{M(z)+1}
=
\frac{\frac{1+z/2}{1+z/4}-1}{\frac{1+z/2}{1+z/4}+1}
=
\frac{z}{8+3z}.
\]
For $z\in\mathbb{D}$, we obtain
\[
\left|\frac{z}{8+3z}\right|
\le \frac{|z|}{8-3|z|}
<\frac{1}{5}<1.
\]
Hence $\left|\frac{M(z)-1}{M(z)+1}\right|<1$, which is equivalent to $\operatorname{Re} M(z)>0$. Therefore,
\[
\operatorname{Re}\!\left(\frac{z\,\varphi'(z)}{\varphi(z)-1}\right)
=
\operatorname{Re}\!\left(\frac{1+z/2}{1+z/4}\right)>0,
\qquad z\in\mathbb{D},
\]
and consequently $\varphi$ is starlike with respect to $\varphi(0)=1$.\\[2mm]
Finally, we prove  that $\operatorname{Re}\varphi(z)>0$ for all $z\in\mathbb{D}$.
Indeed, set
\[
w=1+\frac{z}{2}=u+iv,
\qquad u=\operatorname{Re}w,\quad v=\operatorname{Im}w.
\]
Since $|z|<1$, we have
\[
|w-1|=\left|\frac{z}{2}\right|<\frac12.
\]
Consequently,
\[
u=\operatorname{Re}w=\operatorname{Re}\!\left(1+\frac{z}{2}\right)
=1+\operatorname{Re}\!\left(\frac{z}{2}\right)
\ge 1-\left|\frac{z}{2}\right|
>1-\frac12=\frac12,
\]
and also
\[
|v|=\left|\operatorname{Im}w\right|
=\left|\operatorname{Im}\!\left(\frac{z}{2}\right)\right|
\le \left|\frac{z}{2}\right|
<\frac12.
\]
Now
\[
\varphi(z)=w^{2}=(u+iv)^{2}=(u^{2}-v^{2})+i(2uv),
\]
so
\[
\operatorname{Re}\varphi(z)=u^{2}-v^{2}.
\]
Since $|v|<1/2$, we have $-v^{2}>-\left(1/2\right)^{2}$, and hence
\[
\operatorname{Re}\varphi(z)=u^{2}-v^{2}
>
u^{2}-\left(\frac12\right)^{2}.
\]
Since $u>1/2$, it follows that
\[
u^{2}-\left(\frac12\right)^{2}>0.
\]
Therefore,
\[
\operatorname{Re}\varphi(z)>0 \qquad \text{for all } z\in\mathbb{D}.
\]
Therefore, $\varphi$ satisfies the standard hypotheses of a Ma--Minda function. We denote by $\mathcal{S}^*(\varphi)$
the class of starlike functions subordinate to $\varphi$, namely
\[
\mathcal{S}^*(\varphi)
:= \left\{\, f \in \mathcal{A} : \frac{z f'(z)}{f(z)} \prec \varphi(z) \,\right\}.
\]

\section{Integral representation of the class}
Let $f \in \mathcal{S}^*(\varphi)$, then we have  ${z f'(z)}/{f(z)} \prec \varphi(z)$. Which implies that there exists a Schwarz function $w(z)$ such that $w:\mathbb{D}\to\mathbb{D}$ is analytic, $w(0)=0$, and
\[
\frac{z f'(z)}{f(z)}=\varphi\big(w(z)\big), \quad z \in \mathbb{D}.
\]
Integrating the logarithmic derivative, we obtain
\[
\log\!\left(\frac{f(z)}{z}\right)
=\int_{0}^{z}\frac{\varphi(w(\xi))-1}{\xi}\,d\xi,
\]
and therefore the integral representation
\begin{equation}\label{eq:3.1}
f(z) = z \exp\!\left( \int_{0}^{z} \frac{\varphi(w(t)) - 1}{t}\, dt \right).
\end{equation}
Assuming $w(t)=t,$ and take $\widetilde{f}=f$ for this case, we obtain
\begin{align}\label{eq:3.2}
\widetilde{f}(z)&=z \exp \int_{0}^{z} \frac{\varphi(t)-1}{t}\, dt \\
\widetilde{f}(z)&=z+ z^2+ \frac{5}{8}z^3+ \frac{7 }{24}z^4+ \cdots.
\end{align}
The next theorem gives conditions under which the M\"obius function
\[
p(z) = \frac{1 + A z}{1 + B z}
\]
is subordinate to $\varphi(z)$, when satisfying the condition in the following theorem
ensuring that $p(\mathbb{D}) \subset \varphi(\mathbb{D})$.
Geometrically, this means that the image circle of $p(z)$ lies inside the image
of $\varphi(\mathbb{D})$, which guarantees subordination.
Using the theorem in the \cite{GoelKumar2020}, we prove similar result for our class.
\begin{thm}
Let $-1 < B < A \leq 1$, and define $p(z) = \dfrac{1 + Az}{1 + Bz}$. Then \( p(z) \prec \varphi(z) \) provided that \( \dfrac{1}{4} \leq \dfrac{1 - A}{1 - B} \) and \( \dfrac{1 + A}{1 + B} \leq \dfrac{9}{4} \).
\end{thm}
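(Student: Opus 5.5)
The plan is to argue geometrically, exactly as the remark before the statement suggests: show that $p(\mathbb{D})\subset\varphi(\mathbb{D})$. Since $\varphi$ is univalent (Section 2) and $p(0)=\varphi(0)=1$, this gives $p\prec\varphi$.

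First I describe $p(\mathbb{D})$. Since $-1<B<A\le 1$, the M\"obius map $p$ has real coefficients and pole $-1/B$ outside $\overline{\mathbb{D}}$ (or no finite pole if $B=0$). So $p(\mathbb{D})$ is an open disk symmetric about the real axis. Its diameter on $\mathbb{R}$ has endpoints $a=p(-1)=\frac{1-A}{1-B}$ and $b=p(1)=\frac{1+A}{1+B}$, with $a<b$. Write $c=(a+b)/2$ for the centre and $r=(b-a)/2$ for the radius. The hypotheses $a\ge 1/4$ and $b\le 9/4$ give $c\in[1/4,9/4]$ and
\[
r\le \min\{c-\tfrac14,\ \tfrac94-c\}.
\]
Since $\varphi(\mathbb{D})$ is a Jordan domain containing $c$ (it contains the real segment $(1/4,9/4)$), it suffices to prove the following claim: for every $c\in[1/4,9/4]$, the distance from $c$ to the boundary curve $\varphi(e^{it})$ equals $\min\{c-\tfrac14,\tfrac94-c\}$. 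That is, the nearest boundary points are the real ones, $\varphi(-1)=1/4$ and $\varphi(1)=9/4$. Given the claim, the disk of radius $r$ about $c$ does not meet $\partial\varphi(\mathbb{D})$, so it lies in $\varphi(\mathbb{D})$.

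To prove the claim, expand $\varphi(e^{it})-c=(1-c)+e^{it}+\tfrac14e^{2it}$ and compute its squared modulus. Putting $x=\cos t$ and using $\cos 2t=2x^2-1$, this gives a quadratic in $x$:
\[
g(x)=(1-c)^2+\tfrac{17}{16}-\tfrac{1-c}{2}+\Bigl(2(1-c)+\tfrac12\Bigr)x+(1-c)x^2,\qquad x\in[-1,1].
\]
I then need its minimum on $[-1,1]$ to be attained at an endpoint.

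1. If $c\ge 1$, then $g$ is concave (or linear), so the minimum is at $x=\pm1$.
2. If $c<1$, then $g$ is convex with vertex at $x_0=-1-\frac{1}{4(1-c)}<-1$. Hence $g$ is increasing on $[-1,1]$ and the minimum is at $x=-1$.

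Finally, $g(1)=(9/4-c)^2$ and $g(-1)=(1/4-c)^2$, which proves the claim.

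The main obstacle is exactly this step: showing that no non-real boundary point of the limaçon-like region $\varphi(\mathbb{D})$ is closer to $c$ than the real endpoints. The reduction to a quadratic in $\cos t$ and the vertex location are what make it work; the rest is bookkeeping.
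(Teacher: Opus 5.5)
Your proof is correct. It shares the paper's overall plan: identify $p(\mathbb D)$ as the disk with real diameter $[\frac{1-A}{1-B},\frac{1+A}{1+B}]$, then show it misses the boundary curve $\varphi(\partial\mathbb D)$. The key step, however, is organized differently.

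The paper observes that the two hypotheses together are equivalent to $|c-\tfrac54|+r\le 1$. That is, $p(\mathbb D)\subset\mathbb B(\tfrac54,1)$, the disk with diameter $[\tfrac14,\tfrac94]$. It then needs a single computation, $|\varphi(e^{it})-\tfrac54|^2=\tfrac98-\tfrac18\cos 2t\ge 1$. At that centre the linear term in $\cos t$ cancels (your $g$ with $c=\tfrac54$ is $\tfrac54-\tfrac{x^2}{4}$), so no case split or vertex location is needed.

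You instead compute $\operatorname{dist}(c,\varphi(\partial\mathbb D))$ for every real centre $c$. This costs the two cases $c\ge 1$ and $c<1$. Your formula for $g$, the vertex $-1-\frac{1}{4(1-c)}$, and the values $g(1)=(\tfrac94-c)^2$, $g(-1)=(\tfrac14-c)^2$ all check out.

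Your stronger-looking claim is not more general than the paper's, though. Its only nontrivial half, $\operatorname{dist}(c,\varphi(\partial\mathbb D))\ge\min\{c-\tfrac14,\tfrac94-c\}$, already follows from the single inclusion $\mathbb B(\tfrac54,1)\subset\varphi(\mathbb D)$. The reason is that the disk of radius $\min\{c-\tfrac14,\tfrac94-c\}=1-|c-\tfrac54|$ about $c$ is contained in $\mathbb B(\tfrac54,1)$. So the nesting observation buys economy at no cost.

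In one respect your write-up is more careful than the paper's. You address the topological point that a connected set containing $c\in\varphi(\mathbb D)$ and missing the Jordan curve $\varphi(\partial\mathbb D)$ must lie in $\varphi(\mathbb D)$. The paper passes over this when it deduces $\mathbb B(\tfrac54,1)\subset\varphi(\mathbb D)$ from the boundary estimate alone.
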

\begin{pf}
Assume that $-1 < B < A \le 1$ and that
\begin{equation}\label{eq:3.3}
\frac{1}{4} \le \frac{1-A}{1-B}
\qquad\text{and}\qquad
\frac{1+A}{1+B} \le \frac{9}{4}.
\end{equation}
We prove that $p(z) \prec \varphi(z)$, where $\varphi(z) = (1+z/2)^2$.
\medskip
Since
\[
p(z) = \frac{1+Az}{1+Bz},
\]
is a Möbius transformation with real coefficients, the image of $\partial\mathbb D$
is a Euclidean circle. A standard computation shows that $p(\partial\mathbb D)$
is the circle with center
\begin{equation}\label{eq:3.4}
a = \frac{1-AB}{1-B^2}
\end{equation}
and radius
\begin{equation}\label{eq:3.5}
r = \frac{A-B}{1-B^2}.
\end{equation}
Since $p(0)=1$ lies inside this circle, the map $p$ sends the unit disk conformally onto
the disk $\mathbb B(a,r),$
\[
p(\mathbb D) = \mathbb B(a,r).
\]
Using \eqref{eq:3.4} and \eqref{eq:3.5}, we obtain
\begin{equation}\label{eq:3.6}
a-r = \frac{1-A}{1-B},
\qquad
a+r = \frac{1+A}{1+B}.
\end{equation}
Substituting \eqref{eq:3.6} into the inequalities in \eqref{eq:3.3} shows that
\eqref{eq:3.3} is equivalent to
\begin{equation}\label{eq:3.7}
\frac14 \le a-r
\qquad\text{and}\qquad
a+r \le \frac94.
\end{equation}
The pair of inequalities in \eqref{eq:3.7} is equivalent to the condition
\[
|a - 5/4| + r \le 1,
\]
hence
\begin{equation}\label{eq:3.8}
\mathbb B(a,r) \subset \mathbb B\!\left(\frac54,\,1\right).
\end{equation}
\medskip
Next we show that
\begin{equation}\label{eq:3.9}
\mathbb B\!\left(\frac54,\,1\right) \subset \varphi(\mathbb D).
\end{equation}
For $z=e^{it}$, a direct computation shows that
\[
\left|\varphi(e^{it}) - \frac54\right|^2
= \frac{9}{8} - \frac{1}{8}\cos(2t).
\]
and this quantity is at least $1$ for all $t\in\mathbb R$. Therefore the entire
curve $\varphi(\partial\mathbb D)$ lies outside the circle of radius $1$
centered at $5/4$, except at two tangency points at $t=0$ and $t=\pi$. This proves \eqref{eq:3.9}.
\medskip
Combining \eqref{eq:3.8} and \eqref{eq:3.9} gives
\[
p(\mathbb D) = \mathbb B(a,r) \subset \mathbb B\!\left(\frac54,\,1\right)
\subset \varphi(\mathbb D).
\]
Since $\varphi$ is univalent on $\mathbb D$, the map
\[
w(z) := \varphi^{-1}(p(z))
\]
is analytic on $\mathbb D$, satisfies $w(\mathbb D)\subset\mathbb D$,
and follows $w(0)=0$ because $p(0)=\varphi(0)=1$.
Thus
\[
p(z) = \varphi(w(z)) \quad \mbox{for all } z\in\mathbb D,
\]
which shows that $p \prec \varphi$.
\medskip
This completes the proof.
\end{pf}\\
We denote the class $\mathcal{C}_\gamma$ as below,
\[
\mathcal{C}_\gamma
:=
\left\{
f \in \mathcal{A} :
\operatorname{Re}\, \left(1+\frac{z f''(z)}{f'(z)}\right)>\gamma,
\quad z\in\mathbb{D}
\right\}, \quad \mbox{for}\quad
\quad 0\le \gamma<1.
\]\\
\begin{thm}
Let $f \in \mathcal{S}^*(\varphi)$. Then $f\in \mathcal{C}_{\gamma}$ in $|z|< r_{\gamma}$, where $r_\gamma$ is the least positive root of the equation $g(r)=\gamma$, and
\[
g(r)=\left(1-r-\frac{r^2}{4}\right)-\frac{r\left(1+\frac r2\right)}{\left(1-\frac r2\right)^2(1-r^2)},
\qquad \gamma\in [0,1).
\]
\end{thm}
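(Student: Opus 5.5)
Since $f\in\mathcal{S}^*(\varphi)$, we write $zf'(z)/f(z)=p(z)$ with $p(z)=\varphi(w(z))$ for a Schwarz function $w$. Logarithmic differentiation gives the standard identity
\[
1+\frac{zf''(z)}{f'(z)} = p(z)+\frac{zp'(z)}{p(z)}
= \varphi(w(z)) + \frac{z\,w'(z)\,\varphi'(w(z))}{\varphi(w(z))}.
\]
We will bound the real part of each term from below on $|z|=r<1$ and show that the sum exceeds $g(r)$. Then $f\in\mathcal{C}_\gamma$ in $|z|<r_\gamma$, because $g(0)=1>\gamma$, $g$ is continuous, and $r_\gamma$ is the first point where $g$ drops to $\gamma$.

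\emph{First term.} With $w=w(z)$ and $|w|\le |z|=r$ (Schwarz lemma), we have $\operatorname{Re}\varphi(w)=\operatorname{Re}(1+w+w^2/4)\ge 1-|w|-|w|^2/4\ge 1-r-r^2/4$. This gives the first bracket in $g(r)$.

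\emph{Second term.} Here $\varphi'(w)/\varphi(w)=(1+w/2)/(1+w/2)^2=1/(1+w/2)$. So
\[
\left|\frac{zw'(z)\varphi'(w)}{\varphi(w)}\right| = \frac{|z|\,|w'(z)|}{|1+w/2|}.
\]
By the Schwarz--Pick lemma, $|w'(z)|\le (1-|w|^2)/(1-r^2)\le 1/(1-r^2)$, and $|1+w/2|\ge 1-r/2$. This yields a bound of the shape $r/((1-r/2)(1-r^2))$.

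The stated $g$ instead subtracts $r(1+r/2)/\big((1-r/2)^2(1-r^2)\big)$, which is larger because $(1+r/2)/(1-r/2)\ge 1$. So the stated form is a weaker but still valid bound. We can reach it exactly by estimating more crudely: use $|\varphi'(w)|\le 1+r/2$ and $|\varphi(w)|\ge (1-r/2)^2$, the latter by the argument behind \eqref{eq:2.1} applied on $|w|\le r$. Combining these with Schwarz--Pick gives
\[
\operatorname{Re}\left(1+\frac{zf''}{f'}\right)\ge \left(1-r-\frac{r^2}{4}\right)-\frac{r(1+r/2)}{(1-r/2)^2(1-r^2)}=g(r).
\]

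\emph{Conclusion.} Since $g(0)=1$ and $g(r)\to-\infty$ as $r\to1^-$, the equation $g(r)=\gamma$ has a least root $r_\gamma\in(0,1)$, and $g(r)>\gamma$ on $[0,r_\gamma)$. The minimum principle for harmonic functions lets us pass from $|z|=r$ to $|z|<r_\gamma$; alternatively, the pointwise bound already holds for every $z$ with $|z|=r<r_\gamma$.

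The main obstacle is being careful that the lower bound on $\operatorname{Re}$ of the first term and the upper bound on the second term are taken with consistent use of $|w|\le r$. We also need the fact that $1-r-r^2/4$ and the subtracted quantity are respectively decreasing and increasing in $|w|$, so replacing $|w|$ by $r$ is legitimate. Monotonicity of $g$ itself is not needed.
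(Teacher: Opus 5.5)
Your proposal is correct and follows the paper's proof essentially step for step: the same identity $1+zf''/f'=\varphi(w)+zw'\varphi'(w)/\varphi(w)$, the same crude bounds $|\varphi'(w)|\le 1+r/2$ and $|\varphi(w)|\ge(1-r/2)^2$, Schwarz--Pick with $1-|w|^2\le 1$, and the same estimate $\operatorname{Re}\varphi(w)\ge 1-r-r^2/4$. The only difference is at the end: the paper proves $g$ is strictly decreasing (so the root is unique), whereas you use $g(0)=1>\gamma$ and continuity to get $g>\gamma$ on $[0,r_\gamma)$, which is legitimate given the statement's ``least positive root'' formulation; your remark that $|\varphi'(w)/\varphi(w)|=1/|1+w/2|$ would give a sharper radius is also correct.
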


\begin{proof}
Let $f\in\mathcal{S}^*(\varphi)$. Then
\[
\frac{z f'(z)}{f(z)} \prec \varphi(z)=\left(1+\frac z2\right)^2,
\]
so there exists an analytic Schwarz function $w:\mathbb D\to\mathbb D$ with $w(0)=0$ such that
\[
\frac{z f'(z)}{f(z)}=\varphi(w(z)) \qquad (z\in\mathbb D).
\]
Equivalently,
\[
z f'(z)=f(z)\,\varphi(w(z)).
\]
Differentiating both sides gives
\[
z f''(z)+f'(z)=f'(z)\varphi(w(z)) + f(z)\varphi'(w(z))\,w'(z),
\]
and hence
\[
1+z\frac{f''(z)}{f'(z)}
=\varphi(w(z))+\frac{f(z)}{f'(z)}\,\varphi'(w(z))\,w'(z).
\]
Taking real parts and using $\operatorname{Re}(A+B)\ge \operatorname{Re}(A)-|B|$, we get
\begin{equation}\label{eq:3.11}
\operatorname{Re}\!\left(1+z\frac{f''(z)}{f'(z)}\right)
\ge \operatorname{Re}(\varphi(w(z)))-
\left|\frac{f(z)}{f'(z)}\,\varphi'(w(z))\,w'(z)\right|.
\end{equation}
From $z f'(z)=f(z)\varphi(w(z)),$ we have
\begin{equation}\label{eq:3.12}
\frac{f(z)}{f'(z)}=\frac{z}{\varphi(w(z))}.
\end{equation}
Substituting \eqref{eq:3.12} into \eqref{eq:3.11} gives
\[
\operatorname{Re}\!\left(1+z\frac{f''(z)}{f'(z)}\right)
\ge \operatorname{Re}(\varphi(w(z)))-
\frac{|z|}{|\varphi(w(z))|}\,|\varphi'(w(z))|\,|w'(z)|.
\]
Now apply the Schwarz--Pick lemma to $w,$
\[
|w'(z)|\le \frac{1-|w(z)|^2}{1-|z|^2},
\]
therefore, we have 
\begin{equation}\label{eq:3.13}
\operatorname{Re}\!\left(1+z\frac{f''(z)}{f'(z)}\right)
\ge \operatorname{Re}(\varphi(w(z)))-
\frac{|z|}{|\varphi(w(z))|}\,|\varphi'(w(z))|\,
\frac{1-|w(z)|^2}{1-|z|^2}.
\end{equation}
Since $0\le 1-|w(z)|^2\le 1$, we further obtain
\[
\operatorname{Re}\!\left(1+z\frac{f''(z)}{f'(z)}\right)
\ge \operatorname{Re}(\varphi(w(z)))-
\frac{|z|}{|\varphi(w(z))|}\,|\varphi'(w(z))|\,
\frac{1}{1-|z|^2}.
\]
Next, for $\varphi(z)=\left(1+ z/2\right)^2$ we have $\varphi'(z)=1+ z/2$, and therefore using Schwarz lemma, we obtain
\begin{equation}\label{eq:3.14}
\left.
\begin{aligned}
	|\varphi'(w(z))|
	&=\left|1+\frac{w(z)}2\right|
	\le 1+\frac{|w(z)|}{2}
	\le 1+\frac{|z|}{2},\\[4mm]
	|\varphi(w(z))|
	&=\left|1+\frac{w(z)}2\right|^2
	\ge \left(1-\frac{|w(z)|}{2}\right)^2
	\ge \left(1-\frac{|z|}{2}\right)^2,
\end{aligned}
\right\}
\end{equation}
substituting \eqref{eq:3.14} into \eqref{eq:3.13}, and using
\[
\operatorname{Re}(\varphi(w(z)))=\operatorname{Re}\!\left(1+\frac{w(z)}2\right)^2,
\]
we obtain
\[
\operatorname{Re}\!\left(1+z\frac{f''(z)}{f'(z)}\right)
\ge
\operatorname{Re}\!\left(1+\frac{w(z)}2\right)^2
-\frac{|z|\left(1+\frac{|z|}{2}\right)}
{\left(1-\frac{|z|}{2}\right)^2(1-|z|^2)}.
\]
Now estimating the real part, 
\[
\operatorname{Re}\!\left(1+\frac{w(z)}2\right)^2
=
\operatorname{Re}\!\left(1+w(z)+\frac{w(z)^2}{4}\right)
\ge 1-|w(z)|-\frac{|w(z)|^2}{4}
\ge 1-|z|-\frac{|z|^2}{4}.
\]
Therefore, for every $z\in\mathbb D$,
\[
\operatorname{Re}\!\left(1+z\frac{f''(z)}{f'(z)}\right)
\ge
\left(1-|z|-\frac{|z|^2}{4}\right)
-\frac{|z|\left(1+\frac {|z|}2\right)}{\left(1-\frac {|z|}2\right)^2(1-|z|^2)}
=
g(|z|),
\]
where
\[
g(r)=\left(1-r-\frac{r^2}{4}\right)-\frac{r\left(1+\frac r2\right)}{\left(1-\frac r2\right)^2(1-r^2)},
\qquad 0\le r<1.
\]
It remains to prove that $g$ is strictly decreasing on $(0,1)$. Write
\[
g(r)=1-r-\frac{r^2}{4}-h(r),
\qquad
h(r):=\frac{r\left(1+\frac r2\right)}{\left(1-\frac r2\right)^2(1-r^2)}.
\]
Then
\[
g'(r)=-1-\frac r2-h'(r).
\]
A simple computation gives
\[
h'(r)=
\frac{4\bigl(-r^4-3r^3+2r^2+3r+2\bigr)}
{(2-r)^3(1-r)^2(1+r)^2}.
\]
It is easy to see that 
\[
-r^4-3r^3+2r^2+3r+2
=
(1-r^2)(r^2+3r+2)+3r^2>0
\qquad \text{for } 0<r<1,
\]
furthermore, 
\[
(2-r)^3(1-r)^2(1+r)^2>0
\qquad \text{for } 0<r<1.
\]
Thus, we have 
\[
h'(r)>0
\qquad \text{for } 0<r<1.
\]
Therefore
\[
g'(r)=-1-\frac r2-h'(r)<0
\qquad \text{for } 0<r<1,
\]
so $g$ is strictly decreasing on $(0,1)$.
Thus, for each $\gamma\in[0,1)$, the equation $g(r)=\gamma$ has a unique solution $r_\gamma\in(0,1)$.\\[2mm]
Finally, if $|z|<r_\gamma$, then since $g$ is strictly decreasing on $(0,1)$,
\[
g(|z|)>g(r_\gamma)=\gamma.
\]
Consequently,
\[
\operatorname{Re}\!\left(1+z\frac{f''(z)}{f'(z)}\right)\ge g(|z|)>\gamma.
\]
Hence
\[
\operatorname{Re}\!\left(1+z\frac{f''(z)}{f'(z)}\right)>\gamma
\qquad \text{for } |z|<r_\gamma,
\]
that is, $f\in \mathcal{C}_\gamma$ in $|z|<r_\gamma$.
This completes the proof.
\end{proof}
The following lemma will be useful in establishing the coefficient bounds for the class $\mathcal{S}^*(\varphi)$.
\begin{lem}\cite{LiberaZlotkiewicz1982}\label{lemma1}
Let $P(z)$ be a function in the Carathéodory class $\mathcal{P},$ and  $s \in \mathbb{N}$, then $|p_n -p_{n-s}\,  p_s|\leq 2,$ $n \geq s.$ for $n = 1, 2, 3, \ldots$
\end{lem}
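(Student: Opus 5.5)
We prove the estimate $|p_n - p_{n-s}p_s|\le 2$ (with $p_0=1$) by a convolution argument with roots of unity, the standard route of Libera and Złotkiewicz. Since the real part of $p$ is a positive harmonic function, we write $p$ via the Herglotz representation
\[
p(z)=\int_{0}^{2\pi}\frac{1+e^{-i\theta}z}{1-e^{-i\theta}z}\,d\mu(\theta),
\]
where $\mu$ is a probability measure. This gives $p_k=2\int e^{-ik\theta}\,d\mu(\theta)$ for $k\ge1$. The target inequality is then a statement about Fourier coefficients of $\mu$, namely
\[
\left|\int e^{-in\theta}d\mu-2\int e^{-i(n-s)\theta}d\mu\int e^{-is\theta}d\mu\right|\le 1 .
\]

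The plan is to build a new Carathéodory function whose $n$-th coefficient equals $p_n-p_{n-s}p_s$, up to a factor of modulus at most one. We then conclude from the classical sharp bound $|q_k|\le2$ for every $q\in\mathcal P$, which itself follows at once from $|\int e^{-ik\theta}d\mu|\le1$.

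First, we form the $s$-fold symmetrization
\[
p_{(s)}(z)=\frac1s\sum_{j=0}^{s-1}p(\varepsilon^j z^{1/s}),\qquad \varepsilon=e^{2\pi i/s}.
\]
This lies in $\mathcal P$, is a function of $z$, and equals $1+p_s z+p_{2s}z^2+\cdots$. Second, we use the fact that $\mathcal P$ is convex and closed under $p\mapsto p(e^{i\alpha}z)$. We then consider an expression of the form
\[
q(z)=\frac{p(z)+\lambda(z)}{\cdots},
\]
or more concretely the product-type identity
\[
\frac{1}{2}\bigl(p(z)-1\bigr)\Bigl(\ldots\Bigr).
\]
Since this is hard to pin down, we go instead with the cleaner measure-theoretic approach. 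Set $c_k=\int e^{-ik\theta}d\mu$. We must show $|c_n-2c_{n-s}c_s|\le1$ for $1\le s\le n$; when $s=n$ the claim reads $|c_n-2c_n|=|c_n|\le1$, which is trivial. For $s<n$ we choose a rotation $\omega$ with $|\omega|=1$ and use positivity of the Toeplitz form. The matrix
\[
T=\begin{pmatrix}1&\bar c_{s}&\bar c_{n}\\ c_{s}&1&\bar c_{n-s}\\ c_n&c_{n-s}&1\end{pmatrix}
\]
is positive semidefinite, because its entries are $\int e^{-i(k-l)\theta}d\mu$ for the index set $\{0,s,n\}$. The determinant and the $2\times2$ minors of $T$ yield Schur-type constraints. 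The main step, and the expected obstacle, is to extract from these constraints the bound $|c_n-2c_sc_{n-s}|\le1$.

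We try first the following. Positive semidefiniteness lets us write $c_n=c_sc_{n-s}+\rho\sqrt{(1-|c_s|^2)(1-|c_{n-s}|^2)}$ with $|\rho|\le1$. Then
\[
|c_n-2c_sc_{n-s}|\le|c_s||c_{n-s}|+\sqrt{(1-|c_s|^2)(1-|c_{n-s}|^2)}\le1
\]
by Cauchy--Schwarz applied to the vectors $(|c_s|,\sqrt{1-|c_s|^2})$ and $(|c_{n-s}|,\sqrt{1-|c_{n-s}|^2})$.

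The parametrization of $c_n$ is a Schur-algorithm fact that needs justification, and this is the step we expect to require care. It does hold for positive semidefinite $3\times3$ matrices with unit diagonal: the $(1,3)$ entry lies in a disk centered at the product of the adjacent entries, with radius given by the geometric mean of the complementary defects. Multiplying back by $2$ then gives $|p_n-p_{n-s}p_s|=2|c_n-2c_sc_{n-s}|\le 2$, which completes the argument.
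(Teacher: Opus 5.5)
Your final argument is correct. It necessarily takes a different route from the paper, which gives no proof and simply quotes the inequality from the literature; you instead derive it from the Herglotz representation and the positivity of the Toeplitz section $T$ indexed by $\{0,s,n\}$.

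The step you flag as needing care is a one-line identity,
\[
\det T=(1-|c_s|^2)(1-|c_{n-s}|^2)-|c_n-c_sc_{n-s}|^2 ,
\]
so $\det T\ge 0$ gives $|c_n-c_sc_{n-s}|\le\sqrt{(1-|c_s|^2)(1-|c_{n-s}|^2)}$ with no appeal to the Schur algorithm. Equivalently, apply Cauchy--Schwarz in $L^2(\mu)$ to $c_n-c_sc_{n-s}=\int(e^{-is\theta}-c_s)(e^{-i(n-s)\theta}-c_{n-s})\,d\mu$. Your triangle-inequality and Cauchy--Schwarz finish then goes through.

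Two pieces of cleanup:
\begin{itemize}
\item Delete the abandoned paragraph on $s$-fold symmetrization and the unfinished quotient $q(z)$; they play no role.
\item Fix the case $s=n$. There you silently use $p_0=2c_0=2$, contradicting your own convention $p_0=1$. With $p_0=1$ the quantity is just $p_n-p_0p_n=0$, so the case is trivial either way.
\end{itemize}

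The citation buys brevity. Your route buys a self-contained and more informative statement. Writing $c_n-2wc_sc_{n-s}=(c_n-c_sc_{n-s})+(1-2w)c_sc_{n-s}$, the same two estimates give $|p_n-wp_sp_{n-s}|\le 2\max\{1,|1-2w|\}$ for every complex $w$. The stated lemma is the case $w=1$.
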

\begin{thm}\label{Coefficients}
Let $f(z) = z + \sum_{n=2}^{\infty} a_n z^n \in \mathcal{S}^*(\varphi)$.
Then the following coefficient bounds hold
\[
|a_2| \leq 1, \quad |a_3| \leq \frac{5}{8}, \quad \text{and} \quad |a_4| \leq 0.338667.
\]
All these bounds are sharp.
\end{thm}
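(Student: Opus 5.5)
The plan is to work directly with the Schwarz function rather than with the Carathéodory class. Write
\[
\frac{zf'(z)}{f(z)}=\varphi(w(z))=1+w(z)+\frac{w(z)^2}{4},
\qquad w(z)=c_1z+c_2z^2+c_3z^3+\cdots,
\]
and compare coefficients on both sides. On the left we have
\[
\frac{zf'(z)}{f(z)}=1+a_2z+(2a_3-a_2^2)z^2+(3a_4-3a_2a_3+a_2^3)z^3+\cdots.
\]
On the right the coefficients are $c_1$, $c_2+\tfrac14c_1^2$ and $c_3+\tfrac12c_1c_2$. Solving successively gives
\[
a_2=c_1,\qquad a_3=\frac12\Bigl(c_2+\frac54c_1^2\Bigr),\qquad
a_4=\frac13\Bigl(c_3+2c_1c_2+\frac78c_1^3\Bigr).
\]
I will double-check these expressions against $\widetilde f$ in \eqref{eq:3.2}. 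Taking $w(z)=z$ (so $c_1=1$, $c_2=c_3=0$) gives $1$, $5/8$ and $7/24$, which matches.

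The first two bounds are elementary. Since $|c_1|\le1$ by the Schwarz lemma, $|a_2|\le1$. For $a_3$ I use the classical inequality $|c_2|\le 1-|c_1|^2$. This gives
\[
|a_3|\le\frac12\Bigl(1-|c_1|^2+\frac54|c_1|^2\Bigr)=\frac12\Bigl(1+\frac14|c_1|^2\Bigr)\le\frac58.
\]
Both bounds are attained by $\widetilde f$, i.e. by $w(z)=z$.

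The main obstacle is $a_4$. Here the value $7/24$ from $\widetilde f$ is not the maximum, so the extremal Schwarz function is not the identity. I will invoke the Prokhorov--Szynal lemma: for a Schwarz function,
\[
|c_3+\mu c_1c_2+\nu c_1^3|\le\Psi(\mu,\nu),
\]
with $\Psi$ given explicitly on a partition of the $(\mu,\nu)$-plane into regions $D_1,\dots,D_{12}$. With $\mu=2$ and $\nu=7/8$ one must locate the point among these regions. I expect it to lie in the region where
\[
\Psi(\mu,\nu)=\frac23(|\mu|+1)\Bigl(\frac{|\mu|+1}{3(|\mu|+1+\nu)}\Bigr)^{1/2}.
\]
This gives $\Psi(2,7/8)=2\,(31/8)^{-1/2}=4\sqrt2/\sqrt{31}$, and hence
\[
|a_4|\le\frac{4\sqrt2}{3\sqrt{31}}\approx0.338667,
\]
which is exactly the stated constant. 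The careful step is verifying the defining inequalities of that region for $(2,7/8)$, rather than those of a neighbouring region where $\Psi=1$ or a different formula applies. I will check each boundary inequality numerically.

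For sharpness of $a_4$, I will take the extremal Schwarz function from the Prokhorov--Szynal analysis in this region. This is a Blaschke-type function $w(z)=z\,\dfrac{t-z}{1-tz}$ composed with a suitable rotation. Its parameter $t\in(0,1)$ is chosen so that $|c_1|$ equals the critical value $\bigl((|\mu|+1)/(3(|\mu|+1+\nu))\bigr)^{1/2}$, with the phases aligned so that equality holds. The extremal function is then $f(z)=z\exp\int_0^z(\varphi(w(t))-1)/t\,dt$, using the representation \eqref{eq:3.1}. A direct substitution of its $c_1,c_2,c_3$ into the formula for $a_4$ should give $0.338667$.
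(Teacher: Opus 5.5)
Your proposal is correct and follows essentially the paper's route: the same reduction $a_4=\frac13\bigl(c_3+2c_1c_2+\frac78c_1^3\bigr)$, followed by the Prokhorov--Szynal lemma at $(\mu,\nu)=(2,7/8)$. That point lies in $D_8\cap D_9$ because $-2\le \nu\le 1$, so it yields exactly $\frac{4\sqrt2}{3\sqrt{31}}\approx 0.338667$, and your extremal function with $t=\sqrt{8/31}\approx 0.508001$ is a rotation of the paper's $w(z)=-z\,\frac{z+0.508001}{1+0.508001z}$. The only cosmetic difference is that for $a_2,a_3$ you use the Schwarz-coefficient bound $|c_2|\le 1-|c_1|^2$, whereas the paper uses the Carath\'eodory bounds $|p_n|\le 2$.
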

\begin{proof}
Let $f \in \mathcal{S}^*(\varphi)$. Then there exists a Schwarz function  $w(z)$ satisfying
\[
w(z) = \sum_{k=1}^{\infty} w_k z^k
\]
such that
\[
\frac{z f'(z)}{f(z)} = \left(1 + \frac{w(z)}{2} \right)^2, \quad z \in \mathbb{D}.
\]
Assume that
\[
w(z) = \frac{p(z) - 1}{p(z) + 1},\quad \mbox{and}  \quad p(z) = 1 + \sum_{n=1}^{\infty}p_{n}z^{n} \in \mathcal{P}.
\]
After substituting the expressions for $w(z)$, $p(z)$, and $f(z)$, and comparing coefficients, we obtain
\begin{align*}
a_2 = \frac{p_1}{2}, \qquad
a_3 = \frac{p_1^2 + 8p_2}{32}, \qquad
a_4 = \frac{32p_3 - p_1^3}{192},
\end{align*}
Using the coefficient estimate for the Carathéodory class $\mathcal{P}$, we have $|p_n| \leq 2$ for $n \geq 1$.\\[2mm]
We now compute the bounds for $a_2, a_{3}$, and $a_{4}$ respectively,
\[
|a_2| \leq 1,
\]
which is attained by the Schwarz function $w(z) = z$, giving
\[
f(z) = z \exp\left( \frac{z^2}{8} + z \right).
\]
Similarly, using Lemma~\ref{lemma1} we can easily obtain
\[
|a_3| \leq \frac{5}{8}.
\]
Which is achieved by $w(z) = z$, which also gives
\[
f(z) = z \exp\left( \frac{z^2}{8} + z \right),
\]
For the coefficient $a_{4},$ we use the relation between the coefficients of Schwarz and Carathéodory functions,
\[
w(z)=w_1 z+w_2 z^2+w_3 z^3+w_4 z^4+\cdots,\qquad w(0)=0,\qquad |w(z)|<1.
\]
Define the associated Carath\'eodory function by
\[
p(z)=\frac{1+w(z)}{1-w(z)}
=1+p_1 z+p_2 z^2+p_3 z^3+p_4 z^4+\cdots .
\]
Since
\[
\frac{1+w(z)}{1-w(z)}
=1+2\bigl(w(z)+w(z)^2+w(z)^3+\cdots\bigr),
\]
we expand termwise to obtain the coefficient relations
\[
p_1=2w_1,
\]
\[
p_2=2\bigl(w_2+w_1^2\bigr),
\]
\[
p_3=2\bigl(w_3+2w_1w_2+w_1^3\bigr),
\]
\[
p_4=2\bigl(w_4+2w_1w_3+w_2^2+3w_1^2w_2+w_1^4\bigr).
\]
which reduces $a_{4}$ to
\begin{align*}
a_{4}=\frac{1}{3}w_3+\frac{2}{3}w_1w_2+\frac{7}{24}w_1^{3}.
\end{align*}
Now, by applying Lemma~2 of Prokhorov and Szynal~\cite{ProkhorovSzynal1981}, we obtain that
\[
|a_{4}| \leq 0.338667
\]
and hence the bound for $|a_4|$ is sharp. Let
\[
w(z) = -z \cdot \frac{z+0.508001}{1+0.508001 z},
\]
giving
\[
\frac{z f'(z)}{f(z)} \,=\, \varphi(w(z)).
\]
\[
f(z)
= z
- 0.508001\,z^{2}
- 0.209676\,z^{3}
+ 0.338667\,z^{4}
+\ldots.
\]
Which proves the sharpness of the extremal bound for $a_{4}.$
\end{proof}

\section{Sharp Hankel Determinants}
In 1966, Pommerenke \cite{Pommerenke1966} introduced the concept of the Hankel determinants for the class $\mathcal{S}$, and later it was studied by others as well. The expression of the $q$th Hankel determinant for a function $f \in \mathcal{A}$, whose coefficients are given as follows:
\[
H_q(n) =
\begin{vmatrix}
a_n & a_{n+1} & \cdots & a_{n+q-1} \\
a_{n+1} & a_{n+2} & \cdots & a_{n+q} \\
\vdots & \vdots & \ddots & \vdots \\
a_{n+q-1} & a_{n+q} & \cdots & a_{n+2q-2}
\end{vmatrix}
\quad \mbox{for } q, n \in \mathbb{N}.
\]
For some special choices of $n$ and $q$,   the famous Fekete-Szegő functional, which is defined as 
\begin{equation}\label{eq:4.1}
H_2(2) = a_2 a_4 - a_3^{\,2}.
\end{equation}
is the second order Hankel determinant. For our case $a_1 := 1$, the expression of the third order Hankel determinant is given by
\begin{align*}
H_3(1) =
\begin{vmatrix}
1 & a_2 & a_3 \\
a_2 & a_3 & a_4 \\
a_3 & a_4 & a_5
\end{vmatrix}
= a_3(a_2 a_4 - a_3^2) - a_4(a_4 - a_2 a_3) + a_5(a_3 - a_2^2).
\end{align*}
The following lemma serves as a basis for establishing our main result, as it contains the well-known formulas for $p_2$, $p_3$, and $p_4$.
\begin{lem} {\label{lemma2}}\textnormal{{ \cite{LiberaZlotkiewicz1982}}}
Let $p \in \mathcal{P}$ be of the form $p(z) = 1 + \sum_{n=1}^{\infty} p_n z^n$. Then
\begin{align*}
2 p_2 &= p_1^2 + \gamma \bigl(4 - p_1^2\bigr), \\
4 p_3 &= p_1^3 + 2\bigl(4 - p_1^2\bigr)p_1 \gamma
- \bigl(4 - p_1^2\bigr)p_1 \gamma^2
+ 2\bigl(4 - p_1^2\bigr)\bigl(1 - |\gamma|^2\bigr)\eta, \\
8 p_4 &= p_1^4
+ \bigl(4 - p_1^2\bigr)\gamma\bigl(p_1^2(\gamma^2 - 3\gamma + 3) + 4\gamma\bigr)
\\
&\quad- 4\bigl(4 - p_1^2\bigr)\bigl(1 - |\gamma|^2\bigr)
\left(p_1(\gamma - 1)\eta + \overline{\gamma}\,\eta^2
- (1 - |\eta|^2)\rho\right)
\end{align*}
for some complex numbers $\gamma$, $\eta$, and $\rho$ such that $|\gamma| \leq 1$, $|\eta| \leq 1$, and $|\rho| \leq 1$.
\end{lem}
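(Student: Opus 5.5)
We will reduce the statement to the Schur parametrization of Schwarz functions, via $p=(1+w)/(1-w)$ and the relations between the $p_n$ and $w_n$ already written out in the proof of Theorem~\ref{Coefficients}. Throughout we assume, as is customary for these formulas, that $p_1$ is real with $0\le p_1\le 2$. This is harmless: the class $\mathcal{P}$ is invariant under $p(z)\mapsto p(e^{i\theta}z)$, and the formulas contain no conjugates of $p_1$, so they are meant in this normalized setting. Write $c:=p_1/2\in[0,1]$, so that $w(z)=\sum_{k\ge1}w_kz^k$ has $w_1=c$, and
\[
p_2=2(w_2+w_1^2),\quad p_3=2(w_3+2w_1w_2+w_1^3),\quad p_4=2(w_4+2w_1w_3+w_2^2+3w_1^2w_2+w_1^4).
\]

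The key step is to run the Schur algorithm three times. Set $\omega_0(z)=w(z)/z$. By the Schwarz lemma $|\omega_0|\le 1$, and $\omega_0(0)=c$. If $|c|<1$, put
\[
\omega_1(z)=\frac{\omega_0(z)-c}{z\,(1-c\,\omega_0(z))}, \qquad \gamma:=\omega_1(0),
\]
which is again bounded by $1$. Repeat this to obtain $\omega_2$ with $\eta:=\omega_2(0)$ and $\omega_3$ with $\rho:=\omega_3(0)$, all of modulus at most $1$. Inverting each step gives $\omega_{j}=(\alpha_j+z\omega_{j+1})/(1+\bar\alpha_j z\omega_{j+1})$. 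Expanding to third order then yields
\begin{align*}
w_2&=(1-c^2)\gamma,\\
w_3&=(1-c^2)\bigl((1-|\gamma|^2)\eta - c\gamma^2\bigr),
\end{align*}
and an analogous expression for $w_4$ involving $\rho$, $\eta^2\bar\gamma$, and $c$. The degenerate cases $|c|=1$, $|\gamma|=1$ and $|\eta|=1$ make the corresponding factors $(1-c^2)$, $(1-|\gamma|^2)$ and $(1-|\eta|^2)$ vanish. In each of these cases we can choose the later parameters arbitrarily, for example equal to $0$.

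Next, substitute these expressions into $p_2$, $p_3$ and $p_4$ and use $4-p_1^2=4(1-c^2)$. For $p_2$ one immediately gets $2p_2=p_1^2+\gamma(4-p_1^2)$. For $p_3$ the terms $2w_1w_2$ and $w_1^3$ combine with $w_3$ into the stated form. The coefficient check is that $-c\gamma^2(1-c^2)\cdot 8=-(4-p_1^2)p_1\gamma^2$.

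The main obstacle is the $p_4$ identity. One must expand $w_4$ carefully, including the cross terms $\bar\gamma\eta^2$ and $-(1-|\eta|^2)\rho$, add $2w_1w_3+w_2^2+3w_1^2w_2+w_1^4$, and then regroup into $p_1^4+(4-p_1^2)\gamma\bigl(p_1^2(\gamma^2-3\gamma+3)+4\gamma\bigr)$ plus the $(1-|\gamma|^2)$ part. I would organize this by collecting powers of $c$ and comparing term by term, and then check the outcome on the test function $p=(1+z)/(1-z)$ (the case $c=1$) and on the case $c=0$.
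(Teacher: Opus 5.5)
Your argument is correct. The paper gives no proof of this lemma and simply imports it from the literature, so your route is genuinely different from what the paper does.

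Running the Schur algorithm on $w(z)/z$ produces exactly the Schwarz-coefficient parametrization that the paper later states separately as Lemma~\ref{lem:schwarz-param}. Pushing it through $p=(1+w)/(1-w)$ then gives the present lemma. This buys three things:
\begin{itemize}
\item it shows that the two cited lemmas are one fact written in two coordinate systems;
\item it identifies $\gamma,\eta,\rho$ as the first Schur parameters of $w(z)/z$, which is what justifies your treatment of the degenerate cases;
\item it makes the paper self-contained at the cost of a page of algebra.
\end{itemize}

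The step you flagged as the main obstacle does close. Take $c=p_1/2$, $K=1-c^2$, and $w_4$ as in Lemma~\ref{lem:schwarz-param} with $c_1=c$. Then
\[
w_4+2cw_3+w_2^2+3c^2w_2
=K\gamma\bigl(c^2(\gamma^2-3\gamma+3)+\gamma\bigr)
-K\bigl(1-|\gamma|^2\bigr)\bigl(2c(\gamma-1)\eta+\overline{\gamma}\,\eta^2-(1-|\eta|^2)\rho\bigr).
\]
Multiply by $16$, use $16K=4(4-p_1^2)$ and $2c=p_1$, and add $16c^4=p_1^4$. This gives precisely the stated formula for $8p_4$.

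Your reading of the statement, with $p_1\in[0,2]$, is the right one, but the justification should be sharpened. The normalization is not merely harmless; it is necessary, and the paper's statement silently omits it.

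Rotation invariance only transports the normalized formulas to a general $p_1$ in a modified form:
\begin{itemize}
\item $4-|p_1|^2$ replaces $4-p_1^2$;
\item $\overline{p_1}\,\gamma^2$ replaces $p_1\gamma^2$.
\end{itemize}
The formulas as literally written fail for non-real $p_1$. For example, take $w(z)=z(c+z)/(1+\overline{c}z)$ with $c=i/4$. This gives $p_1=i/2$, $p_2=7/4$ and $p_3=11i/8$. The $p_2$ identity then forces $\gamma=15/17$, and the $p_3$ identity forces $\eta=15i/8$, so $|\eta|>1$.

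This does not affect the rest of the paper, which only applies the lemma with $p_1\in[0,2]$.
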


Next, we recall the following well-known result by Choi \textit{et al.} \cite{ChoiKimSugawa2007}
\begin{lem}\label{lemma3} {\cite{ChoiKimSugawa2007}}
Let $A,B,C\in\mathbb R$ and define
\begin{equation*}
Y(A,B,C):=\max_{z\in\overline{\mathbb D}}\Big(|A+Bz+Cz^{2}|+1-|z|^{2}\Big).
\end{equation*}
\begin{itemize}
\item[(i)] If $AC\ge 0$, then
\begin{align*}
	Y(A,B,C)=
	\begin{cases}
		|A|+|B|+|C|, & \text{if } |B|\ge 2\,(1-|C|),\\[6pt]
		1+|A|+\dfrac{B^{2}}{4\,(1-|C|)}, & \text{if } |B|<2\,(1-|C|).
	\end{cases}
\end{align*}
\item[(ii)] If $AC<0$, then
\begin{align*}
	Y(A,B,C)=
	\begin{cases}
		1-|A|+\dfrac{B^{2}}{4\,(1-|C|)}, & \text{if } -4AC\,(C^{2}-1)\le B^{2}\ \text{ and }\ |B|<2\,(1-|C|),\\[8pt]
		1+|A|+\dfrac{B^{2}}{4\,(1+|C|)}, & \text{if } B^{2}<\min\!\big\{\,4(1+|C|)^{2},\ -4AC\,(C^{2}-1)\big\},\\[8pt]
		R(A,B,C), & \text{otherwise,}
	\end{cases}
\end{align*}
where,
\begin{align*}
	R(A,B,C)=
	\begin{cases}
		|A|+|B|+|C|, & \text{if } |C|\big(|B|+4|A|\big)\le |AB|,\\[6pt]
		-|A|+|B|+|C|, & \text{if } |AB|\le |C|\big(|B|-4|A|\big),\\[10pt]
		\big(|A|+|C|\big)\sqrt{\,1-\dfrac{B^{2}}{4AC}\,}, & \text{otherwise.}
	\end{cases}
\end{align*}
\end{itemize}
\end{lem}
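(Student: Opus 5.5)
By the rotational and reflection symmetries of the problem, I would first normalise the coefficients. Replacing $z$ by $-z$ changes $B$ into $-B$ and replacing $(A,B,C)$ by $(-A,-B,-C)$ leaves $|A+Bz+Cz^2|$ unchanged, so I may assume $B\ge 0$ and $A\ge 0$. Writing $z=re^{it}$ with $0\le r\le 1$ and putting $x=\cos t\in[-1,1]$, a direct expansion gives
\[
|A+Bz+Cz^2|^2 = A^2+B^2r^2+C^2r^4-2ACr^2 + 2Br(A+Cr^2)\,x + 4ACr^2\,x^2 =: F_r(x).
\]
The problem then splits into an inner maximisation of $F_r(x)$ over $x\in[-1,1]$ for fixed $r$, followed by an outer maximisation of $\sqrt{\max_x F_r}+1-r^2$ over $r\in[0,1]$.

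\textbf{Case (i), $AC\ge 0$.} Here $F_r$ is a convex quadratic in $x$, and $A\ge0$ forces $C\ge0$. Hence the linear coefficient is nonnegative and the maximum is attained at $x=1$, that is, at $z=r$. This gives the value $|A|+|B|r+|C|r^2$. I then maximise $\Phi(r)=1+|A|+|B|r-(1-|C|)r^2$ on $[0,1]$. This is a concave quadratic whose vertex is at $r_0=|B|/(2(1-|C|))$.
\begin{itemize}
\item If $r_0\ge 1$, i.e. $|B|\ge 2(1-|C|)$, the maximum is $\Phi(1)=|A|+|B|+|C|$.
\item Otherwise the maximum is $\Phi(r_0)=1+|A|+B^2/(4(1-|C|))$.
\end{itemize}
These are exactly the two formulas in (i). The degenerate case $|C|=1$ falls into the first branch.

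\textbf{Case (ii), $AC<0$.} Now $F_r$ is concave in $x$. Its critical point is
\[
x_0(r)=-\frac{B(A+Cr^2)}{4ACr}.
\]
If $x_0(r)\notin[-1,1]$, the maximum is attained at an endpoint. If $x_0(r)\in[-1,1]$, the maximum is the interior value
\[
F_r(x_0)=A^2+B^2r^2+C^2r^4-2ACr^2-\frac{B^2(A+Cr^2)^2}{4AC},
\]
which I would simplify to $(A-Cr^2)^2\bigl(1-B^2/(4ACr^2)\bigr)$ after collecting terms. The outer step then compares three branches:
\begin{itemize}
\item the endpoint branch $x=\pm1$, which gives $|A\pm Br+Cr^2|+1-r^2$;
\item the interior branch $(|A|+|C|r^2)\sqrt{1-B^2/(4ACr^2)}+1-r^2$, restricted to the set of $r$ where $|x_0(r)|\le1$;
\item the boundary point $r=1$, where the interior branch yields $(|A|+|C|)\sqrt{1-B^2/(4AC)}$ and the endpoints yield $\pm|A|+|B|+|C|$ according to the sign of $A+C$.
\end{itemize}

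For the maxima at $r<1$, I would differentiate in $r$. The interior branch $r<1$ should produce the value $1+|A|+B^2/(4(1+|C|))$, under the condition $B^2<\min\{4(1+|C|)^2,-4AC(C^2-1)\}$. The endpoint branch with $|A+Cr^2|=|C|r^2-|A|$ should produce $1-|A|+B^2/(4(1-|C|))$, under the condition $-4AC(C^2-1)\le B^2$ and $|B|<2(1-|C|)$. The remaining parameters put the maximum on $|z|=1$, which gives $R(A,B,C)$. Within $R$, the three subcases come from whether $x_0(1)\ge1$, whether $x_0(1)\le-1$, or whether $x_0(1)$ lies in between. Written out, these are $|C|(|B|+4|A|)\le|AB|$, $|AB|\le|C|(|B|-4|A|)$, and the residual case.

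\textbf{Expected obstacle.} I expect the main difficulty to be the bookkeeping in case (ii). The task is to show that the regions where each candidate wins are precisely those listed, and to check that the candidates agree along the boundaries between regions, so that no further critical points are missed. My first attempt at this would be to verify that the defining inequalities in (ii) are exactly the conditions for the relevant critical radius to lie in $(0,1)$ while $|x_0|\le1$ there. Throughout, I would use that each branch is a one-variable function of $r$ with at most one interior critical point.
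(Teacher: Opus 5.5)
The paper gives no proof of this lemma (it is quoted from Choi--Kim--Sugawa), so your outline has to stand on its own.

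Part (i) is essentially correct. You only need to add two small points. If $A=0$, a further sign change is needed to get $C\ge0$. For $|C|\ge1$, the function $\Phi(r)=1+|A|+|B|r-(1-|C|)r^2$ is nondecreasing rather than concave, so it again gives $|A|+|B|+|C|$. This second situation is exactly the case $|C_1|\ge1$ in which the paper uses the lemma.

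Part (ii), however, is false as printed, so no argument can reproduce its conditions ``exactly'' as you claim. The threshold must be $-4AC(C^{-2}-1)$ in both places, and the first branch of $R$ must be $|A|+|B|-|C|$. Since $|B|<2(1-|C|)$ forces $|C|<1$, the printed condition $-4AC(C^2-1)\le B^2$ is then automatic. For example, $A=\tfrac12$, $B=0$, $C=-\tfrac14$ falls in the printed first case and the formula predicts $Y=\tfrac12$, yet $z=0$ already gives $\tfrac32$. Similarly, for $A=1$, $B=10$, $C=-\varepsilon$ with $0<\varepsilon<\tfrac12$, one lands in the first branch of $R$, which predicts $11+\varepsilon$. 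But $x_0(r)>1$ for all $r$, so $Y=\max_{0\le r\le1}\bigl(2+10r-(1+\varepsilon)r^2\bigr)=11-\varepsilon$.

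Your outline hides this through several computational slips.

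(a) The vertex value is $F_r(x_0)=(A-Cr^2)^2\bigl(1-\tfrac{B^2}{4AC}\bigr)$, with no $r^2$ in the second factor, because $4ACr^2-(A+Cr^2)^2=-(A-Cr^2)^2$. So the interior branch is $1+|A|K+(|C|K-1)r^2$ with $K=\sqrt{1-B^2/(4AC)}$. It is monotone in $r$, so it contributes a new candidate only at $r=1$, namely the third branch of $R$.

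(b) Consequently $1+|A|+\tfrac{B^2}{4(1+|C|)}$ does not come from the interior branch but from the endpoint $x=1$, i.e.\ $z=r$. With $A>0>C$ and $B\ge0$, this branch is $1+|A|+|B|r-(1+|C|)r^2$. Its vertex $r_0=|B|/(2(1+|C|))$ lies in $[0,1)$ iff $B^2<4(1+|C|)^2$. The requirement $x_0(r_0)\ge1$ (so that $x=1$ really is the maximizer there) reduces to $B^2\le-4AC(C^{-2}-1)$. Symmetrically, consider the branch $x=-1$, which is $1-|A|+|B|r-(1-|C|)r^2$ with vertex $|B|/(2(1-|C|))$. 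Together with $x_0\le-1$ at that vertex, it gives $B^2\ge-4AC(C^{-2}-1)$.

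(c) At $r=1$ the endpoint values are $|A+B+C|=\bigl||A|+|B|-|C|\bigr|$ and $|A-B+C|=\bigl|-|A|+|B|+|C|\bigr|$, never $|A|+|B|+|C|$. You identified the subcases of $R$ correctly via $x_0(1)\ge1$ and $x_0(1)\le-1$, but the first of them yields $|A|+|B|-|C|$.

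Even after these corrections, what you call bookkeeping is the real content of the lemma. You must show that outside the two vertex regimes the maximum over $r$ is attained at $r=1$. That requires comparing branches across the curves $|x_0(r)|=1$, where the active branch changes. Counting the critical points of each branch separately does not settle it.
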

We now proceed to prove the main results of this paper.
\begin{thm}
Let $f \in \mathcal{S}^*(\varphi)$. Then
$
|H_2(2)| \leq {1}/{4}.
$
The result is sharp.
\end{thm}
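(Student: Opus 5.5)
The plan is to write $H_2(2)$ in terms of the coefficients of the Schwarz function, pass to a Carath\'eodory function, parametrize with Lemma~\ref{lemma2}, reduce the $\gamma$-dependence to the quantity $Y(A,B,C)$ of Lemma~\ref{lemma3}, and finally maximize over $p_1$.

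\emph{Step 1: $H_2(2)$ in Schwarz coefficients.} Let $w(z)=\sum_{k\ge1}w_kz^k$ with $zf'(z)/f(z)=\varphi(w(z))=1+w(z)+w(z)^2/4$. Comparing coefficients as in the proof of Theorem~\ref{Coefficients} gives
\[
a_2=w_1,\qquad a_3=\frac{w_2}{2}+\frac{5}{8}w_1^2,\qquad a_4=\frac13 w_3+\frac23 w_1w_2+\frac{7}{24}w_1^3 .
\]
Substituting these into \eqref{eq:4.1}, I expect
\[
H_2(2)=\frac13 w_1w_3+\frac1{24}w_1^2w_2-\frac14 w_2^2-\frac{19}{192}w_1^4 .
\]
\emph{Sharpness.} Take $w(z)=z^2$, so $w_2=1$ and all other $w_k=0$. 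Then $H_2(2)=-1/4$, so $|H_2(2)|=1/4$. The extremal function is $f(z)=z\exp\!\left(\int_0^z\frac{\varphi(t^2)-1}{t}\,dt\right)=z\exp\!\left(\frac{z^2}{2}+\frac{z^4}{32}\right)$.

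\emph{Step 2: parametrization.} Write $w=(p-1)/(p+1)$ with $p\in\mathcal P$. Then $w_1=p_1/2$, $w_2=p_2/2-p_1^2/4$, and $w_3$ is the corresponding cubic expression. Insert Lemma~\ref{lemma2} for $p_2$ and $p_3$. Since the class is rotation invariant, we may assume $p_1=c\in[0,2]$. After collecting terms, $H_2(2)$ should take the form
\[
H_2(2)=\frac{4-c^2}{k}\Big(\alpha(c)+\beta(c)\gamma+\delta(c)\gamma^2+\varepsilon(c)\,(1-|\gamma|^2)\eta\Big)+\mu(c),
\]
where $k$ is a numerical constant and $\alpha,\beta,\delta,\varepsilon,\mu$ are real polynomials in $c$. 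The $\eta$-term comes only from $w_1w_3$, so $\varepsilon(c)$ is proportional to $c$. It is cleaner to fold $\mu(c)$ into $\alpha$ when $c<2$, and to treat $c=2$ separately: there $p(z)=(1+z)/(1-z)$, $w(z)=z$, and $|H_2(2)|=|\tfrac1{24}\cdot 0+\dots|=19/192<1/4$.

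\emph{Step 3: reduction to Lemma~\ref{lemma3}.} Using $|\eta|\le1$ and the triangle inequality,
\[
|H_2(2)|\le \frac{(4-c^2)\,\varepsilon(c)}{k}\,\bigl(|A+B\gamma+C\gamma^2|+1-|\gamma|^2\bigr)\le \frac{(4-c^2)\,\varepsilon(c)}{k}\,Y(A,B,C),
\]
where $A=\alpha/\varepsilon$, $B=\beta/\varepsilon$, $C=\delta/\varepsilon$. The case $c=0$ must be handled separately, because there $\varepsilon=0$. In that case $w_1=0$ and $|H_2(2)|=|w_2|^2/4\le1/4$ directly.

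\emph{Step 4 (main obstacle): maximizing over $c$.} For $c\in(0,2)$, determine the signs of $AC$ and the relevant inequalities between $|B|$ and $2(1-|C|)$. This selects a branch of Lemma~\ref{lemma3}, most likely a single branch on all of $(0,2)$. The resulting explicit function of $c$ must then be shown to stay at most $1/4$, with the supremum approached as $c\to0^+$. I expect the branch bookkeeping to be the hardest part: the conditions of Lemma~\ref{lemma3}(ii) are polynomial inequalities in $c$ that must be checked on the whole interval. If the branch structure becomes unwieldy, my fallback is to bypass Lemma~\ref{lemma3} and work directly with the Schwarz-coefficient bounds
\[
|w_2|\le 1-|w_1|^2,\qquad |w_3|\le 1-|w_1|^2-\frac{|w_2|^2}{1+|w_1|}.
\]
In Step 1's formula this gives
\[
|H_2(2)|\le \frac{x}{3}\Big(1-x^2-\frac{y^2}{1+x}\Big)+\frac{x^2y}{24}+\frac{y^2}{4}+\frac{19}{192}x^4,
\]
with $x=|w_1|$ and $y=|w_2|\le 1-x^2$. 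This is then maximized over the region $0\le x\le1$, $0\le y\le 1-x^2$, checking that the maximum $1/4$ occurs at $x=0$, $y=1$.
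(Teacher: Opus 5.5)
Your proposal is correct. Steps 2--4 are exactly the paper's argument: Lemma~\ref{lemma2} for $p_2,p_3$, normalization by the $\eta$-coefficient, Lemma~\ref{lemma3}, and separate treatment of $p_1=0$ and $p_1=2$. You left the final maximization undone, and the branch analysis you expected to be the main obstacle is in fact trivial. For $c=p_1\in(0,2)$ the normalized coefficients are $A_1=-19c^3/(128(4-c^2))$, $B_1=c/16$ and $C_1=-(12+c^2)/(8c)$. Hence $A_1C_1>0$ and $|C_1|\ge 1$. This puts you in case (i) of Lemma~\ref{lemma3}, where $|B_1|\ge 2(1-|C_1|)$ holds automatically, and the bound becomes $g_1(c)=(768-96c^2-5c^4)/3072\le 1/4$.

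Your fallback is a genuinely more elementary route, and it closes immediately. The bound on $|w_3|$ follows from Lemma~\ref{lem:schwarz-param} with $|\eta|\le 1$. Your bound can be rewritten as
\[
\Phi(x,y)=\frac{x(1-x^2)}{3}+\frac{19x^4}{192}+\frac{x^2}{24}\,y+\frac{3-x}{12(1+x)}\,y^2 ,
\]
which is increasing in $y\ge 0$. So its maximum on $0\le y\le 1-x^2$ is
\[
\Phi(x,1-x^2)=\frac14-\frac{x^2}{8}-\frac{5x^4}{192}\le\frac14 ,
\]
with equality only at $x=0$, $y=1$.

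Under $c=2x$ and $y=(1-x^2)|\gamma|$, your $\Phi$ coincides identically with the paper's bound $|A|+|B||\gamma|+|C||\gamma|^2+|D|(1-|\gamma|^2)$ and yields the same $g_1$. The difference is that you replace Lemma~\ref{lemma3} by a one-line monotonicity check. You also need no separate boundary cases, since $x=0$ and $x=1$ sit inside the region. The paper's template earns its keep only in classes where $A_1C_1<0$ for some $p_1$. There, keeping $A+B\gamma+C\gamma^2$ together is genuinely sharper than a termwise triangle inequality; here it is overkill.

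One small slip: for $w(z)=z^2$ the extremal function is $z\exp\!\left(z^2/2+z^4/16\right)$, not $z^4/32$. This does not affect $a_2,a_3,a_4$ or the value $H_2(2)=-1/4$.
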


\begin{proof}
Let $f \in \mathcal{S}^*(\varphi)$ be given by
\[
z\frac{f'(z)}{f(z)} \prec \varphi(z), \qquad z\in \mathbb{D}.
\]
The second Hankel determinant of order $2$ is defined as:
\[
H_2(2)=a_2a_4-a_3^2.
\]
From Theorem~\ref{Coefficients}, the coefficients of $f \in \mathcal{S}^*(\varphi)$ are related to those of $p \in \mathcal{P}$ by
\[
a_2=\frac{p_1}{2},\qquad
a_3=\frac{p_1^{2}+8p_2}{32},\qquad
a_4=\frac{32p_3-p_1^{3}}{192}.
\]
Substituting these expressions into $H_2(2)=a_2a_4-a_3^2$, we obtain
\[
H_2(2)
=\left(\frac{p_1}{2}\right)\left(\frac{32p_3-p_1^3}{192}\right)
-\left(\frac{p_1^2+8p_2}{32}\right)^2
=\frac{p_1(32p_3-p_1^3)}{384}-\frac{(p_1^2+8p_2)^2}{1024}.
\]
Hence
\begin{equation}\label{eq:4.2}
H_2(2)
=\frac{p_1p_3}{12}-\frac{p_1^4}{384}-\frac{(p_1^2+8p_2)^2}{1024}
=\frac{-11p_1^4-48p_1^2p_2+256p_1p_3-192p_2^2}{3072}.
\end{equation}
Now using Lemma~\ref{lemma2}, we substitute the parametrizations of \(p_2\) and \(p_3\) into \eqref{eq:4.2}. This gives
\begin{equation}\label{eq:4.3}
\begin{aligned}
	|H_2(2)|
	=\frac{1}{3072}\Big|\Big(&-768 \gamma^2 + 32 p_1^2 \gamma (1 + 4 \gamma)
	+ p_1^4 (-19 - 8 \gamma + 16 \gamma^2) \\
	&\qquad + 512 p_1 \eta - 128 p_1^3 \eta
	+ 128 p_1 (-4 + p_1^2)\eta |\gamma|^2 \Big)\Big|,
\end{aligned}
\end{equation}
where $ |\gamma| \le 1 $ and $ |\eta| \le 1 $.
We rewrite the expression in the form
\begin{equation}\label{eq:H2(2)}
|H_2(2)|
=
\left|
A+B\gamma+C\gamma^2+D(1-|\gamma|^2)
\right|,
\end{equation}
where
\[
A=-\frac{19p_1^4}{3072},\qquad
B=\frac{p_1^2(4-p_1^2)}{384},\qquad
C=\frac{p_1^4+8p_1^2-48}{192},\qquad
D=\frac{\eta\,p_1(4-p_1^2)}{24}.
\]
Taking moduli and using the triangle inequality in \eqref{eq:H2(2)}, we get
\[
|H_2(2)|
\le
|A+B\gamma+C\gamma^2|+|D|(1-|\gamma|^2).
\]
Since only \(|D|\) depends on \(\eta\), and the above upper bound is increasing in \(|\eta|\), we may take \(|\eta|=1\). For \(0<p_1<2\), we have
\[
|D|=\frac{p_1(4-p_1^2)}{24}>0.
\]
Thus
\[
|H_2(2)|
\le
|D|
\left(
\left|
\frac{A}{|D|}+\frac{B}{|D|}\gamma+\frac{C}{|D|}\gamma^2
\right|
+1-|\gamma|^2
\right).
\]
Define
\[
A_1=\frac{A}{|D|}
=-\frac{19p_1^3}{128(4-p_1^2)},\qquad
B_1=\frac{B}{|D|}
=\frac{p_1}{16},\qquad
C_1=\frac{C}{|D|}
=-\frac{12+p_1^2}{8p_1}.
\]
Then
\begin{align*}
|H_2(2)|
\le
|D|\left(
|A_1+B_1\gamma+C_1\gamma^2|+1-|\gamma|^2
\right).
\end{align*}
Since the class $\mathcal{S}^*(\varphi)$ is invariant under rotations, we may assume without loss of generality that \(p_1\) is real and \(p_1\in[0,2]\). We first consider the case \(0<p_1<2\). We compute
\[
A_1C_1
=
\left(-\frac{19p_1^3}{128(4-p_1^2)}\right)
\left(-\frac{12+p_1^2}{8p_1}\right)
=
\frac{19p_1^2(12+p_1^2)}{1024(4-p_1^2)}
>0.
\]
Also,
\[
|C_1|=\frac{12+p_1^2}{8p_1}\ge 1
\qquad \text{for } 0<p_1<2,
\]
and hence
\[
|B_1|\ge 2(1-|C_1|).
\]
Therefore, by lemma~\ref{lemma3},
\[
\max_{|\gamma|\le 1}
\left(
|A_1+B_1\gamma+C_1\gamma^2|+1-|\gamma|^2
\right)
=
|A_1|+|B_1|+|C_1|.
\]
Consequently,
\[
|H_2(2)|\le g_1(p_1),
\]
where
\[
g_1(p_1):=|D|\bigl(|A_1|+|B_1|+|C_1|\bigr).
\]
A direct simplification gives
\[
g_1(p_1)=\frac{768-96p_1^2-5p_1^4}{3072}.
\]
Differentiating $g_{1}(p_{1})$ with respect to $p_{1}$, we obtain
\begin{align*}
g_1'(p_1)=\frac{-192p_1-20p_1^3}{3072}\le 0
\qquad \text{for } 0<p_1<2.
\end{align*}
Hence \(g_1\) is non-increasing on \((0,2)\), and therefore
\[
\sup_{0<p_1<2} g_1(p_1)
=
\lim_{p_1\to 0^+} g_1(p_1)
=
\frac{768}{3072}
=
\frac{1}{4}.
\]
It remains to treat the boundary cases $p_1=0$ and $p_1=2$, where the normalization by $|D|$ is not valid.\\[2mm]
If $p_1=0$, then \eqref{eq:4.3} gives
\[
|H_2(2)|
=
\frac{1}{3072}|-768\gamma^2|
=
\frac{|\gamma|^2}{4}
\le \frac{1}{4}.
\]
If $p_1=2$, then \eqref{eq:4.3} reduces to
\[
|H_2(2)|
=
\frac{304}{3072}
=
\frac{19}{192}
<
\frac{1}{4}.
\]
Combining the interior and boundary cases, we conclude that
\[
|H_2(2)|\le \frac{1}{4}.
\]
Now we show that the estimate is sharp. Let the Schwarz function be \(w(z)=z^2\). Substituting this into \eqref{eq:3.1}, we obtain
\begin{align*}
f(z)
&= z\, \exp\!\left( \int_{0}^{z} \frac{\varphi(t^{2}) - 1}{t}\, dt \right) \\[2mm]
&= z\, \exp{\!\left(\frac{z^{2}}{2}+\frac{z^{4}}{16}\right)}
\in \mathcal{S}^{*}(\varphi).
\end{align*}
Expanding $f$ in the series form we obtain, 
\[
f(z)=z+\frac{1}{2}z^3+\cdots,
\]
where, 
\[
a_2=0,\qquad a_3=\frac12,\qquad a_4=0.
\]
Therefore
\[
H_2(2)=a_2a_4-a_3^2=0-\left(\frac12\right)^2=-\frac14,
\]
and hence
\begin{align*}
|H_2(2)|=\frac14.
\end{align*}
Thus the bound is sharp and is attained for $w(z)=z^2$.
\end{proof}
The following lemma which represents the coefficient relations by (see \cite{ KwonLeckoSim2018, LiberaZlotkiewicz1982})  will be useful for our next proof.
\begin{lem}\label{lem:schwarz-param}
Let
$
w(z)=\sum_{n=1}^{\infty}c_n z^n
$
be a Schwarz function, that is, $\omega$ is analytic in $\mathbb D$,
$w(0)=0$, and $|w(z)|<1$ for all $z\in\mathbb D$.
If $c_1\ge 0$, then there exist complex numbers $\gamma,\eta,\rho$
with
$
|\gamma|\le 1,\, |\eta|\le 1,\, |\rho|\le 1,
$
such that
\begin{align*}
c_2 &= (1-c_1^2)\gamma,\\[2mm]
c_3 &= (1-c_1^2)\bigl(\eta(1-|\gamma|^2)-c_1\gamma^2\bigr),\\[2mm]
c_4 &= (1-c_1^2)\Bigl(
c_1^2\gamma^3
-(1-|\gamma|^2)\bigl(2c_1\gamma\eta+\overline{\gamma}\eta^2\bigr)
+(1-|\gamma|^2)(1-|\eta|^2)\rho
\Bigr).
\end{align*}
\end{lem}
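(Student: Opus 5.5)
The plan is to prove the lemma with the Schur algorithm: peel off one Schwarz-lemma step at a time, and then expand the resulting continued-fraction representation of $w$ as a power series.

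First I would set $w_1(z):=w(z)/z$. By the Schwarz lemma, $w_1$ is analytic in $\mathbb D$ with $|w_1|\le 1$ and $w_1(0)=c_1\in[0,1]$.

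If $c_1=1$, then $w_1\equiv 1$ by the maximum principle. In that case $w(z)=z$ and $c_2=c_3=c_4=0$, so the formulas hold for any choice of $\gamma,\eta,\rho$, since every right-hand side carries the factor $1-c_1^2$.

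If $c_1<1$, I would define the Schur transforms
\[
w_{2}(z):=\frac{w_1(z)-c_1}{z\,(1-c_1 w_1(z))},\qquad
w_{3}(z):=\frac{w_2(z)-\gamma}{z\,(1-\overline{\gamma} w_2(z))},\qquad
\gamma:=w_2(0),
\]
and $\rho:=w_4(0)$ with $\eta:=w_3(0)$, where $w_4$ is built from $w_3$ in the same way. Each Schur transform is again analytic with modulus at most $1$: this is the Schwarz lemma applied to the disk automorphism composed with $w_k$. Hence $|\gamma|,|\eta|,|\rho|\le 1$.

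The degenerate cases occur when $|\gamma|=1$ or $|\eta|=1$. Then the corresponding $w_k$ is a unimodular constant, and the next transform is undefined. There I would choose $\eta$ (respectively $\rho$) arbitrarily in $\overline{\mathbb D}$. This is harmless because the factors $1-|\gamma|^2$ and $1-|\eta|^2$ in the formulas vanish exactly in those cases.

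Next I would invert the transforms:
\[
w_1=\frac{c_1+zw_2}{1+c_1 z w_2},\qquad w_2=\frac{\gamma+zw_3}{1+\overline{\gamma}zw_3},\qquad w_3=\frac{\eta+zw_4}{1+\overline{\eta}zw_4}.
\]
Expanding each as a geometric series to the needed order gives
\[
w_1=c_1+(1-c_1^2)\bigl(zw_2-c_1z^2w_2^2+c_1^2z^3w_2^3\bigr)+O(z^4),
\]
\[
w_2=\gamma+(1-|\gamma|^2)\bigl(zw_3-\overline{\gamma}z^2w_3^2\bigr)+O(z^3),
\]
\[
w_3=\eta+(1-|\eta|^2)\rho z+O(z^2).
\]
Since $c_{n+1}$ is the coefficient of $z^n$ in $w_1$, I would substitute these expansions successively and read off the coefficients.

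The coefficient of $z$ gives $c_2=(1-c_1^2)\gamma$. The coefficient of $z^2$ gives $(1-c_1^2)\bigl((1-|\gamma|^2)\eta-c_1\gamma^2\bigr)$, which is the stated $c_3$.

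The only step needing care is the coefficient of $z^3$. It collects three contributions:
\begin{itemize}
\item $(1-c_1^2)(1-|\gamma|^2)\bigl((1-|\eta|^2)\rho-\overline{\gamma}\eta^2\bigr)$ from the linear term $zw_2$;
\item $-2c_1(1-c_1^2)(1-|\gamma|^2)\gamma\eta$ from the term $-c_1z^2w_2^2$;
\item $c_1^2(1-c_1^2)\gamma^3$ from the term $c_1^2z^3w_2^3$.
\end{itemize}
Their sum is exactly the stated expression for $c_4$. I expect this bookkeeping, rather than any conceptual point, to be the main obstacle.
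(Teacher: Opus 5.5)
Your proof is correct. The Schur transforms $w_2,w_3,w_4$ are well defined in the nondegenerate cases, and the degenerate cases $c_1=1$, $|\gamma|=1$, $|\eta|=1$ are harmless for exactly the reason you give. The truncated expansions of the inverse M\"obius maps are right, and the three contributions to the $z^3$ coefficient of $w_1$ add up to the stated $c_4$.

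The paper gives no proof of this lemma; it only refers to Libera--Zlotkiewicz and Kwon--Lecko--Sim. Implicitly, it treats the relations as the Schwarz-function version of the Carath\'eodory-class formulas in Lemma~\ref{lemma2}. Indeed, put $p=(1+w)/(1-w)$, so that
$p_1=2c_1$, $p_2=2(c_2+c_1^2)$, $p_3=2(c_3+2c_1c_2+c_1^3)$ and $p_4=2(c_4+2c_1c_3+c_2^2+3c_1^2c_2+c_1^4)$.
Substituting these into Lemma~\ref{lemma2} and solving for $c_2,c_3,c_4$ gives exactly the stated formulas, with the same $\gamma,\eta,\rho$. The hypothesis $c_1\ge 0$ is what makes $p_1=2c_1\in[0,2]$, as those formulas require.

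That citation route is a few lines of algebra once Lemma~\ref{lemma2} is granted, but it leaves all the analytic content to the literature, in particular the $p_4$ formula. Your Schur-algorithm route works directly on $w$ and is self-contained. It identifies $\gamma,\eta,\rho$ as the Schur parameters $w_2(0),w_3(0),w_4(0)$ and shows transparently why the factors $1-c_1^2$, $1-|\gamma|^2$, $1-|\eta|^2$ appear. It also extends mechanically to $c_5,c_6,\dots$. Run backwards from $w_4\equiv\rho$, it further shows that every choice of $c_1\in[0,1]$ and $\gamma,\eta,\rho\in\overline{\mathbb D}$ is realized by some Schwarz function. That converse is not needed for the lemma but is useful when discussing sharpness.
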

\begin{thm}\label{thm:H3-1}
Let $f\in \mathcal{S}^*(\varphi)$. Then
$
|H_3(1)|\leq {1}/{9},
$
where $H_3(1)$ denote the third Hankel determinant. Moreover, the estimate is sharp.
\end{thm}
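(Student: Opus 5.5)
The plan is to work directly with the Schwarz function $w(z)=\sum_{n\ge1}c_nz^n$ in $\frac{zf'(z)}{f(z)}=\varphi(w(z))$ and to use the parametrization of Lemma~\ref{lem:schwarz-param} rather than the Carathéodory form.

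\emph{Coefficients.} Since $\varphi(w)-1=w+w^2/4$, I write $w+w^2/4=\sum_{n\ge1}q_nz^n$, where
\[
q_1=c_1,\quad q_2=c_2+\tfrac{c_1^2}{4},\quad q_3=c_3+\tfrac{c_1c_2}{2},\quad q_4=c_4+\tfrac{2c_1c_3+c_2^2}{4}.
\]
By \eqref{eq:3.1}, $\log(f(z)/z)=\sum_{n\ge1} q_nz^n/n$. Exponentiating gives $a_2,\dots,a_5$ as polynomials in $c_1,\dots,c_4$. As a consistency check, $a_4=\frac13c_3+\frac23c_1c_2+\frac{7}{24}c_1^3$, as in Theorem~\ref{Coefficients}.

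\emph{Reduction of $H_3(1)$.} I substitute these coefficients into
\[
H_3(1)=a_3(a_2a_4-a_3^2)-a_4(a_4-a_2a_3)+a_5(a_3-a_2^2).
\]
The class is rotation invariant and $|H_3(1)|$ is unchanged under rotation, so I may assume $c:=c_1\in[0,1]$. Then I insert $c_2,c_3,c_4$ from Lemma~\ref{lem:schwarz-param}. The result has the form
\[
H_3(1)=\Phi_0(c,\gamma)+(1-c^2)(1-|\gamma|^2)\bigl[\Phi_1(c,\gamma)\eta+\Phi_2(c,\gamma)\eta^2+\Phi_3(c,\gamma)(1-|\eta|^2)\rho\bigr],
\]
where the $\Phi_j$ are explicit polynomials in $c,\gamma,\bar\gamma$. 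The variable $\rho$ enters only linearly, through $c_4$ in $a_5$, with coefficient proportional to $(a_3-a_2^2)$.

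\emph{Bounding.} Applying the triangle inequality with $|\rho|\le1$, and writing $x=|\gamma|$, $y=|\eta|$, I get
\[
|H_3(1)|\le \Psi(c,x,y):=|\Phi_0|+(1-c^2)(1-x^2)\bigl(|\Phi_1|y+|\Phi_2|y^2+|\Phi_3|(1-y^2)\bigr),
\]
with the $|\Phi_j|$ bounded above by polynomials in $c$ and $x$. I then treat $\Psi$ on the cube $[0,1]^3$.

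\emph{Edge cases.} At $c=1$ the bracket vanishes. For $c=1$ the Schwarz lemma forces $w(z)=z$, so $f=\widetilde f$ from \eqref{eq:3.2}, and I check directly that $|H_3(1)|<1/9$ for this function. At $c=0$ the expression simplifies a great deal: $a_2=0$, $a_3=\gamma/2$, $a_4=\frac13(\eta(1-|\gamma|^2))$, and so on. There I verify that the maximum is $1/9$, attained at $\gamma=0$, $|\eta|=1$.

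\emph{Interior.} For the interior of the cube, I first maximize in $y$. Since $\Psi$ is a quadratic in $y$, this is elementary: if the coefficient of $y^2$ is nonpositive the maximum is at the vertex or at $y=1$; otherwise it is at an endpoint. Alternatively, fixing $c$ and $\gamma$, I can view the bracketed $\eta$-part as $|A+B\eta+C\eta^2|+1-|\eta|^2$ after dividing by $|\Phi_3|$, and apply Lemma~\ref{lemma3}. This reduces the problem to showing that a function $G(c,x)$ of two variables satisfies $G\le 1/9$ on $[0,1]^2$.

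\emph{Main obstacle.} I expect the hardest step to be this last two-variable maximization. The polynomials are of fairly high degree, and the crude triangle inequality bound may exceed $1/9$ for small positive $c$. I would first examine the critical-point equations $\partial_cG=\partial_xG=0$, together with the faces $x=0$, $x=1$ and $c=0$, hoping to show that $G(c,x)\le G(0,0)=1/9$. If the crude bound fails near $c$ small, I would keep the exact combination $\Phi_1\eta+\Phi_3(1-|\eta|^2)$ and use the sharper Lemma~\ref{lemma3} there, rather than separate moduli.

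\emph{Sharpness.} I take $w(z)=z^3$. Then $f(z)=z\exp\bigl(\tfrac{z^3}{3}+\tfrac{z^6}{24}\bigr)=z+\tfrac13z^4+\cdots$, so $a_2=a_3=a_5=0$ and $a_4=\tfrac13$. Hence $H_3(1)=-a_4^2=-\tfrac19$, and the bound $|H_3(1)|\le 1/9$ is attained.
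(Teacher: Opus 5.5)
Your setup is the paper's own route, and the parts you carry out are correct: the coefficient formulas in $c_1,\dots,c_4$, the substitution from Lemma~\ref{lem:schwarz-param}, the triangle inequality with $|\rho|\le 1$, the checks at $c=0$ and $c=1$, and the extremal function for $w(z)=z^3$.

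\textbf{The gap.} The inequality that actually constitutes the theorem is never proved. After maximizing in $y$, you only say you hope to show $G(c,x)\le 1/9$ on $[0,1]^2$ by examining critical points. This step carries essentially all of the difficulty.

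In the paper's normalization, $9216\,|H_3(1)|\le H(p_1,x,y)$, and the target value $1024$ is attained at the corner $(p_1,x,y)=(0,0,1)$. There $1024-G_1(p,x)$ has no constant or linear terms, and its quadratic part $2048p^2-1088px+896x^2$ is positive definite. So the extremum is a degenerate corner point where the bounding polynomial touches the level $1024$ with zero gradient. A critical-point search would only rediscover the origin. It would still leave a high-degree (bidegree $(6,4)$) polynomial system on the rest of the square that you must resolve rigorously. Your exact maximization in $y$, with its vertex case, also makes $G$ piecewise rational, which makes certification harder rather than easier.

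\textbf{How the paper closes it.} It first replaces $y$ by $1$ in the nonnegative linear $y$-term. The bound then depends on $y$ only through $y^2$, so only $y\in\{0,1\}$ needs checking. This gives two polynomials, $G_1=H_1(\cdot,\cdot,1)$ and $G_2=H_1(\cdot,\cdot,0)$.
\begin{itemize}
\item For $G_2$, all Bernstein coefficients on $[0,1]^2$ are at most $910<1024$.
\item For $G_1$, it expands $1024-G_1$ in the Bernstein basis on subrectangles, subdividing three times toward the origin. All coefficients are positive except on the corner $[0,1/8]^2$. There the positive-definite quadratic part dominates the remainder of total degree at least $3$, giving $1024-G_1\ge \frac{3959871}{131072}(p_1^2+x^2)$.
\end{itemize}
You need a rigorous certificate of this kind.

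\textbf{Two side remarks.} Your worry that the crude triangle-inequality bound overshoots $1/9$ for small $c>0$ is unfounded; the computation above shows it does not. Your fallback of applying Lemma~\ref{lemma3} to the $\eta$-part is not available as stated. That lemma requires real $A,B,C$, whereas your coefficients depend on the complex parameter $\gamma$. Also, $\Phi_3$ is proportional to $a_3-a_2^2$, so it vanishes when $4(1-c^2)\gamma=3c^2$, and the normalization by $|\Phi_3|$ breaks down there.
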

\begin{proof}
Let $f\in \mathcal{S}^*(\varphi),$ be given by 
\begin{align*}
	\frac{z f'(z)}{f(z)}\prec \varphi(z), \quad z\in \mathbb{D}.
\end{align*}
 The third Hankel determinant is defined by:
\[
H_3(1)=
\begin{vmatrix}
	1 & a_2 & a_3 \\
	a_2 & a_3 & a_4 \\
	a_3 & a_4 & a_5
\end{vmatrix}
= a_3(a_2 a_4 - a_3^2)-a_4(a_4-a_2a_3)+a_5(a_3-a_2^2).
\]
For $f\in\mathcal{S}^*(\varphi)$ there exists a Schwarz function $w$ such that
\begin{align}\label{eq:4.4}
	\frac{z f'(z)}{f(z)}=\varphi(w(z))=\left(1+\frac{w(z)}{2}\right)^2.
\end{align}
We derive the coefficients with respect to the Schwarz coefficients.
Writing the Schwarz function in the series form such that
\[
w(z)=\sum_{n=1}^{\infty} c_n z^n,
\]
and putting the expansions of $f\in \mathcal{S}^*(\varphi)$ and $w(z)$ in \eqref{eq:4.4},
we obtain the coefficient relations
\begin{equation}
	\left.
	\begin{aligned}
		a_2 &= c_1,\\
		a_3 &= \frac{1}{8}\left(5c_1^2+4c_2\right),\\
		a_4 &= \frac{1}{24}\left(7c_1^3+16c_1c_2+8c_3\right),\\
		a_5 &= \frac{1}{384}\left(43c_1^4+184c_1^2c_2+72c_2^2+176c_1c_3+96c_4\right).
	\end{aligned}
	\right\}
\end{equation}
Substituting these values into the expansion of $H_3(1)$ yields
\begin{align}\label{eq:5.2}
	9216\, H_{3}(1)=&
	-61c_1^6 + 244c_1^4c_2 + 464c_1^3c_3 + 1088c_1c_2c_3
	- 8c_1^2(89c_2^2 + 108c_4)\notag \\
	\qquad
	&- 32(9c_2^3 + 32c_3^2 - 36c_2c_4).
\end{align}
We now use the relations between the coefficients of the Schwarz functions, which can be stated as:
\begin{align}\label{eq:5.3}
	c_2 &= (1-c_1^2)\gamma,\notag\\
	c_3 &= (1-c_1^2)\left((1-|\gamma|^2)\eta - c_1\gamma^2\right),\notag\\
	c_4 &= (1-c_1^2)\left(c_1^2\gamma^3 - (1-|\gamma|^2)\left(2c_1\gamma\eta + \overline{\gamma}\eta^2 - (1-|\eta|^2)\rho\right)\right),
\end{align}
where $|\gamma|,\,  |\eta|,\,  |\rho| \in [0, 1].$\\[2mm]
Using coefficient relation in \eqref{eq:5.3} in \eqref{eq:5.2}, we obtain
\begin{equation}\label{eq:5.4}
	9216 \, H_{3}(1)=A_{1}+B_{1} \eta +C_{1}\eta^2+D_{1}\rho
\end{equation}
where,
\begin{align*}
	A_1=&-61c_1^6-244\gamma c_1^4(-1+c_1^2)+128\gamma^4c_1^2(-1+c_1^2)^2 \\
	\quad &-8\gamma^2c_1^2\left(89-120c_1^2+31c_1^4\right)
	+32\gamma^3\left(-9-7c_1^2+14c_1^4+2c_1^6\right),\\[0.5em]
	B_1=&16\left(-1+|\gamma|^2\right)c_1(-1+c_1^2)
	\left(29c_1^2+16\gamma^2(-1+c_1^2)+\gamma(68+40c_1^2)\right),\\[0.5em]
	C_1=&-32\left(-1+|\gamma|^2\right)(-1+c_1^2)
	\Bigl(-32(-1+c_1^2)+32|\gamma|^2(-1+c_1^2) \\
	\quad &-9\left(3c_1^2+4\gamma(-1+c_1^2)\right)\overline{\gamma}\Bigr),\\[0.5em]
	D_1=&288\left(-1+|\gamma|^2\right)\left(-1+|\eta|^2\right)(-1+c_1^2)
	\left(3c_1^2+4\gamma(-1+c_1^2)\right).
\end{align*}
Since the class $\mathcal{S}^*(\varphi)$ is invariant under rotations, we may assume without loss of generality that $c_1$ is real and nonnegative. Thus
$
0\le c_1\le 1.
$
Setting
\[
c_1=p_1,\qquad x=|\gamma|,\qquad y=|\eta|,
\]
and taking modulus on both sides of \eqref{eq:5.4}, using the triangle inequality and $|\rho|\le 1$, we obtain
\[
9216\,|H_3(1)|\le H(p_1,x,y),
\]
where 
\begin{align}
	H(p_1,x,y)=&61p_1^6+244p_1^4(1-p_1^2)x
	+8p_1^2(89-120p_1^2+31p_1^4)x^2 \notag\\
	\quad
	&-32(-9-7p_1^2+14p_1^4+2p_1^6)x^3
	+128p_1^2(-1+p_1^2)^2x^4 \notag\\
	\quad
	&+16(1-x^2)p_1(1-p_1^2)\bigl(29p_1^2+16x^2(1-p_1^2)+x(68+40p_1^2)\bigr)y \notag\\
	\quad
	&+32(1-x^2)(1-p_1^2)\bigl(32(1-x^2)(1-p_1^2)+9(3p_1^2+4x(1-p_1^2))x\bigr)y^2 \notag\\
	\quad
	&+288(1-x^2)(1-y^2)(1-p_1^2)\bigl(3p_1^2+4x(1-p_1^2)\bigr).
\end{align}
Now our aim is to maximize this quantity when $\mathcal{D}:=\{(p_1, x, y)\in \mathbb{R}^3:(p_1,x,y)\in [0,1]\times[0, 1]\times[0, 1]\}.$
It is indeed easy to check that the coefficient of $y$ remains non-negative over the region $\mathcal{D}.$ Therefore we take $y=1$ for the specific term 
\[
16(1-x^2)p_1(1-p_1^2)\bigl(29p_1^2+16x^2(1-p_1^2)+x(68+40p_1^2)\bigr)y
\]
and denote this new function as $H_{1}(p_1, x, y),$ with $H(p_1, x, y)\le H_{1}(p_1, x, y).$ Since it is a quadratic polynomial in the variable $y,$ 
\[
\max_{\mathcal{D}}H_{1}(p_1, x, y)=\max_{(p_{1},x)\in [0, 1]\times[0, 1]}\left\{H_{1}(p_1, x, 0), H_{1}(p_1, x, 1)\right\}.
\]
Let us denote
\[
G_{1}(p_1, x):= H_{1}(p_1, x, 1)
\qquad\text{and}\qquad
G_{2}(p_1, x):= H_{1}(p_1, x, 0).
\]
We first deal with the function $G_{1}(p_1, x).$ In this case we prove that the function attains its maximum at the point $(0, 0),$ which is $G_{1}(0, 0)=1024$.\\[2mm]
Let
\[
F(p_1,x):=1024-G_1(p_1,x),
\qquad p:=p_1.
\]
We divide the original square \([0,1]^2\) into the four rectangles
\[
Q_1=\left[0,\frac12\right]\times\left[0,\frac12\right],\qquad
Q_2=\left[0,\frac12\right]\times\left[\frac12,1\right],
\]
\[
Q_3=\left[\frac12,1\right]\times\left[0,\frac12\right],\qquad
Q_4=\left[\frac12,1\right]\times\left[\frac12,1\right].
\]
On each rectangle \(Q=[a,b]\times[c,d]\), we set
\[
p=a+(b-a)u,\qquad x=c+(d-c)v,\qquad (u,v)\in[0,1]\times[0, 1],
\]
and write the transformed polynomial in the Bernstein basis of bidegree $(6,4)$
\[
F_Q(u,v)=\sum_{i=0}^{6}\sum_{j=0}^{4} b^{(Q)}_{ij} B_i^6(u)B_j^4(v),
\]
where,
\[
B_i^6(u)=\binom{6}{i}u^i(1-u)^{6-i},
\qquad
B_j^4(v)=\binom{4}{j}v^j(1-v)^{4-j}.
\]
Now we list every Bernstein coefficient explicitly.
For $Q_{1},$ where
\[
Q_1=\left[0,\frac12\right]\times\left[0,\frac12\right],
\]
the Bernstein coefficient matrix is
\[
\mathcal B(F,Q_1)=
\begin{pmatrix}
	0 & 0 & \frac{112}{3} & 103 & 196\\[5mm]
	0 & -\frac{34}{3} & \frac{124}{9} & \frac{415}{6} & 158\\[5mm]
	\frac{512}{15} & \frac{29}{3} & \frac{1141}{60} & \frac{3551}{60} & \frac{4123}{30}\\[5mm]
	\frac{199}{2} & \frac{1209}{20} & \frac{2453}{48} & \frac{575}{8} & \frac{5351}{40}\\[5mm]
	\frac{2834}{15} & \frac{64571}{480} & \frac{25151}{240} & \frac{16589}{160} & \frac{17441}{120}\\[5mm]
	\frac{3521}{12} & \frac{7177}{32} & \frac{50297}{288} & \frac{607}{4} & \frac{8261}{48}\\[5mm]
	\frac{25827}{64} & \frac{41391}{128} & \frac{16507}{64} & \frac{27783}{128} & \frac{13983}{64}
\end{pmatrix}.
\]
Thus, we have 
\[
\min \mathcal B(F,Q_1)=-\frac{34}{3}.
\]
For $Q_{2},$ we have
\[
Q_2=\left[0,\frac12\right]\times\left[\frac12,1\right],
\]
the Bernstein coefficient matrix is
\[
\mathcal B(F,Q_2)=
\begin{pmatrix}
	196 & 289 & \frac{1228}{3} & 556 & 736\\[5mm]
	158 & \frac{1481}{6} & \frac{3322}{9} & 528 & 736\\[5mm]
	\frac{4123}{30} & \frac{12941}{60} & \frac{19921}{60} & \frac{4937}{10} & \frac{10774}{15}\\[5mm]
	\frac{5351}{40} & \frac{7827}{40} & \frac{71689}{240} & \frac{3631}{8} & \frac{3414}{5}\\[5mm]
	\frac{17441}{120} & \frac{89761}{480} & \frac{4343}{16} & \frac{198149}{480} & \frac{38131}{60}\\[5mm]
	\frac{8261}{48} & \frac{4619}{24} & \frac{73745}{288} & \frac{12141}{32} & \frac{2353}{4}\\[5mm]
	\frac{13983}{64} & \frac{28149}{128} & \frac{16873}{64} & \frac{47025}{128} & \frac{35631}{64}
\end{pmatrix}.
\]
Thus, we have
\[
\min \mathcal B(F,Q_2)=\frac{5351}{40}.
\]
For $Q_{3}.$ where
\[
Q_3=\left[\frac12,1\right]\times\left[0,\frac12\right],
\]
the Bernstein coefficient matrix is
\[
\mathcal B(F,Q_3)=
\begin{pmatrix}
	\frac{25827}{64} & \frac{41391}{128} & \frac{16507}{64} & \frac{27783}{128} & \frac{13983}{64}\\[5mm]
	\frac{49313}{96} & \frac{27037}{64} & \frac{49133}{144} & \frac{18071}{64} & \frac{25427}{96}\\[5mm]
	\frac{151069}{240} & \frac{7963}{15} & \frac{157649}{360} & \frac{3649}{10} & \frac{26469}{80}\\[5mm]
	\frac{29659}{40} & \frac{2569}{4} & \frac{43627}{80} & \frac{9313}{20} & \frac{4209}{10}\\[5mm]
	\frac{16799}{20} & \frac{90179}{120} & \frac{59713}{90} & \frac{70729}{120} & \frac{10877}{20}\\[5mm]
	\frac{5497}{6} & \frac{10285}{12} & 798 & \frac{8975}{12} & \frac{4295}{6}\\[5mm]
	963 & 963 & 963 & 963 & 963
\end{pmatrix}.
\]
Thus, we have
\[
\min \mathcal B(F,Q_3)=\frac{27783}{128}.
\]
For $Q_{4},$ where
\[
Q_4=\left[\frac12,1\right]\times\left[\frac12,1\right],
\]
the Bernstein coefficient matrix is
\[
\mathcal B(F,Q_4)=
\begin{pmatrix}
	\frac{13983}{64} & \frac{28149}{128} & \frac{16873}{64} & \frac{47025}{128} & \frac{35631}{64}\\[5mm]
	\frac{25427}{96} & \frac{47495}{192} & \frac{2441}{9} & \frac{22743}{64} & \frac{16807}{32}\\[5mm]
	\frac{26469}{80} & \frac{11873}{40} & \frac{21727}{72} & \frac{21883}{60} & \frac{122269}{240}\\[5mm]
	\frac{4209}{10} & \frac{7523}{20} & \frac{29307}{80} & \frac{16367}{40} & \frac{21007}{40}\\[5mm]
	\frac{10877}{20} & \frac{11959}{24} & \frac{21656}{45} & \frac{20291}{40} & \frac{35521}{60}\\[5mm]
	\frac{4295}{6} & \frac{2735}{4} & \frac{2009}{3} & \frac{8191}{12} & \frac{1463}{2}\\[5mm]
	963 & 963 & 963 & 963 & 963
\end{pmatrix}.
\]
Thus, we have 
\[
\min \mathcal B(F,Q_4)=\frac{13983}{64}.
\]
Therefore, after dividing the original square into four equal rectangles, the minimum Bernstein coefficient on each one is
\[
Q_1:\ -\frac{34}{3},\qquad
Q_2:\ \frac{5351}{40},\qquad
Q_3:\ \frac{27783}{128},\qquad
Q_4:\ \frac{13983}{64}.
\]
Therefore Bernstein already proves
\[
F(p_1,x)>0
\]
on \(Q_2\), \(Q_3\), and \(Q_4\), since all Bernstein coefficients there are positive.
The only unresolved rectangle is
\[
Q_1=\left[0,\frac12\right]\times\left[0,\frac12\right],
\]
because there the minimum Bernstein coefficient is negative, precisely
$
-{34}/{3}.
$
So the next step is to subdivide only \(Q_1\) into four smaller rectangles and repeat the Bernstein calculation there.\\[2mm]
Now subdivide
\[
Q_1=\left[0,\frac12\right]\times\left[0,\frac12\right]
\]
into the four smaller rectangles
\[
Q_{11}=\left[0,\frac14\right]\times\left[0,\frac14\right],\qquad
Q_{12}=\left[0,\frac14\right]\times\left[\frac14,\frac12\right],
\]
\[
Q_{13}=\left[\frac14,\frac12\right]\times\left[0,\frac14\right],\qquad
Q_{14}=\left[\frac14,\frac12\right]\times\left[\frac14,\frac12\right].
\]
We continue to work with
\[
F(p_1,x)=1024-G_1(p_1,x),
\qquad p:=p_1.
\]
On each rectangle \(Q=[a,b]\times[c,d]\), set
\[
p=a+(b-a)u,\qquad x=c+(d-c)v,\qquad (u,v)\in[0,1]\times[0, 1],
\]
and write the transformed polynomial in the Bernstein basis of bidegree \((6,4)\):
\[
F_Q(u,v)=\sum_{i=0}^{6}\sum_{j=0}^{4} b^{(Q)}_{ij} B_i^6(u)B_j^4(v),
\]
where
\[
B_i^6(u)=\binom{6}{i}u^i(1-u)^{6-i},
\qquad
B_j^4(v)=\binom{4}{j}v^j(1-v)^{4-j}.
\]
Now we list every Bernstein coefficient explicitly.
For $Q_{11},$ where
\[
Q_{11}=\left[0,\frac14\right]\times\left[0,\frac14\right],
\]
the Bernstein coefficient matrix is\\
\[
\mathcal B(F,Q_{11})=
\begin{pmatrix}
	0 & 0 & \frac{28}{3} & \frac{215}{8} & 52\\[5mm]
	0 & -\frac{17}{6} & \frac{32}{9} & \frac{583}{32} & \frac{163}{4}\\[5mm]
	\frac{128}{15} & \frac{317}{120} & \frac{16567}{2880} & \frac{16421}{960} & \frac{2191}{60}\\[5mm]
	\frac{2019}{80} & \frac{5147}{320} & \frac{119909}{7680} & \frac{119091}{5120} & \frac{12501}{320}\\[5mm]
	\frac{2969}{60} & \frac{566747}{15360} & \frac{750769}{23040} & \frac{1111661}{30720} & \frac{92093}{1920}\\[5mm]
	\frac{30893}{384} & \frac{65853}{1024} & \frac{687673}{12288} & \frac{42439}{768} & \frac{64183}{1024}\\[5mm]
	\frac{480003}{4096} & \frac{1596537}{16384} & \frac{1392057}{16384} & \frac{1307517}{16384} & \frac{338553}{4096}
\end{pmatrix}.
\]
Thus, we have
\[
\min \mathcal B(F,Q_{11})=-\frac{17}{6}.
\]
For $Q_{12}$
\[
Q_{12}=\left[0,\frac14\right]\times\left[\frac14,\frac12\right],
\]
the Bernstein coefficient matrix is\\
\[
\mathcal B(F,Q_{12})=
\begin{pmatrix}
	52 & \frac{617}{8} & \frac{659}{6} & \frac{299}{2} & 196\\[5mm]
	\frac{163}{4} & \frac{2025}{32} & \frac{6745}{72} & \frac{3157}{24} & 177\\[5mm]
	\frac{2191}{60} & \frac{17897}{320} & \frac{240187}{2880} & \frac{18999}{160} & \frac{19483}{120}\\[5mm]
	\frac{12501}{320} & \frac{280941}{5120} & \frac{605459}{7680} & 111 & \frac{48643}{320}\\[5mm]
	\frac{92093}{1920} & \frac{367063}{6144} & \frac{183625}{2304} & \frac{332063}{3072} & \frac{55993}{384}\\[5mm]
	\frac{64183}{1024} & \frac{107671}{1536} & \frac{350787}{4096} & \frac{224847}{2048} & \frac{220651}{1536}\\[5mm]
	\frac{338553}{4096} & \frac{1400907}{16384} & \frac{1578837}{16384} & \frac{1899297}{16384} & \frac{595983}{4096}
\end{pmatrix}.
\]\\
Thus, we have
\[
\min \mathcal B(F,Q_{12})=\frac{2191}{60}.
\]
For $Q_{13},$ where
\[
Q_{13}=\left[\frac14,\frac12\right]\times\left[0,\frac14\right],
\]
the Bernstein coefficient matrix is\\
\[
\mathcal B(F,Q_{13})=
\begin{pmatrix}
	\frac{480003}{4096} & \frac{1596537}{16384} & \frac{1392057}{16384} & \frac{1307517}{16384} & \frac{338553}{4096}\\[5mm]
	\frac{945721}{6144} & \frac{1069713}{8192} & \frac{2800825}{24576} & \frac{2564503}{24576} & \frac{210187}{2048}\\[5mm]
	\frac{1005743}{5120} & \frac{10410323}{61440} & \frac{27388337}{184320} & \frac{8255597}{61440} & \frac{1964059}{15360}\\[5mm]
	\frac{624031}{2560} & \frac{2182609}{10240} & \frac{1925727}{10240} & \frac{346497}{2048} & \frac{404011}{2560}\\[5mm]
	\frac{1132141}{3840} & \frac{4004671}{15360} & \frac{10668157}{46080} & \frac{3196279}{15360} & \frac{736927}{3840}\\[5mm]
	\frac{133817}{384} & \frac{477931}{1536} & \frac{1281499}{4608} & \frac{128205}{512} & \frac{29395}{128}\\[5mm]
	\frac{25827}{64} & \frac{93045}{256} & \frac{83725}{256} & \frac{75663}{256} & \frac{17325}{64}
\end{pmatrix}.
\]\\
Thus, we have 
\[
\min \mathcal B(F,Q_{13})=\frac{1307517}{16384}.
\]
For $Q_{14},$ where
\begin{align*}
	Q_{14}=\left[\frac14,\frac12\right]\times\left[\frac14,\frac12\right],
\end{align*}\\
the Bernstein coefficient matrix is\\
\[
\mathcal B(F,Q_{14})=
\begin{pmatrix}
	\frac{338553}{4096} & \frac{1400907}{16384} & \frac{1578837}{16384} & \frac{1899297}{16384} & \frac{595983}{4096}\\[5mm]
	\frac{210187}{2048} & \frac{2479985}{24576} & \frac{877263}{8192} & \frac{999909}{8192} & \frac{905345}{6144}\\[5mm]
	\frac{1964059}{15360} & \frac{497125}{4096} & \frac{4519201}{36864} & \frac{1629815}{12288} & \frac{156895}{1024}\\[5mm]
	\frac{404011}{2560} & \frac{1499603}{10240} & \frac{1459963}{10240} & \frac{302057}{2048} & \frac{417471}{2560}\\[5mm]
	\frac{736927}{3840} & \frac{2699137}{15360} & \frac{1537061}{9216} & \frac{852653}{5120} & \frac{226571}{1280}\\[5mm]
	\frac{29395}{128} & \frac{106955}{512} & \frac{898999}{4608} & \frac{291607}{1536} & \frac{74993}{384}\\[5mm]
	\frac{17325}{64} & \frac{62937}{256} & \frac{58273}{256} & \frac{55749}{256} & \frac{13983}{64}
\end{pmatrix}.
\]\\
Thus, we have 
\[
\min \mathcal B(F,Q_{14})=\frac{338553}{4096}.
\]
Therefore, after subdividing \(Q_1\) into four smaller rectangles, the minimum Bernstein coefficient on each one is
\[
Q_{11}:\ -\frac{17}{6},\qquad
Q_{12}:\ \frac{2191}{60},\qquad
Q_{13}:\ \frac{1307517}{16384},\qquad
Q_{14}:\ \frac{338553}{4096}.
\]
So Bernstein now proves
\[
F(p_1,x)>0
\]
on \(Q_{12}\), \(Q_{13}\), and \(Q_{14}\), since all Bernstein coefficients there are positive.
The only unresolved rectangle after this second subdivision is
\[
Q_{11}=\left[0,\frac14\right]\times\left[0,\frac14\right],
\]
because there the minimum Bernstein coefficient is still negative, precisely
$
-{17}/{6}.
$
So the next step is to subdivide only \(Q_{11}\) again into four smaller rectangles and repeat the Bernstein calculation there.\\[2mm]
Now subdivide
\[
Q_{11}=\left[0,\frac14\right]\times\left[0,\frac14\right]
\]
into the four smaller rectangles
\[
Q_{111}=\left[0,\frac18\right]\times\left[0,\frac18\right],\qquad
Q_{112}=\left[0,\frac18\right]\times\left[\frac18,\frac14\right],
\]
\[
Q_{113}=\left[\frac18,\frac14\right]\times\left[0,\frac18\right],\qquad
Q_{114}=\left[\frac18,\frac14\right]\times\left[\frac18,\frac14\right].
\]
We still work with
\[
F(p,x)=1024-G_1(p,x),\qquad p:=p_1.
\]
For each rectangle \(Q=[a,b]\times[c,d]\), set
\[
p=a+(b-a)u,\qquad x=c+(d-c)v,\qquad (u,v)\in[0,1]\times[0, 1],
\]
and write the transformed polynomial in the Bernstein basis of bidegree \((6,4)\),
\[
F_Q(u,v)=\sum_{i=0}^{6}\sum_{j=0}^{4} b_{ij}^{(Q)}\,B_i^6(u)B_j^4(v),
\]
where
\[
B_i^6(u)=\binom{6}{i}u^i(1-u)^{6-i},
\qquad
B_j^4(v)=\binom{4}{j}v^j(1-v)^{4-j}.
\]
Now the Bernstein coefficient matrices are as follows.
For $Q_{111},$ where
\begin{align*}
	Q_{111}=\left[0,\frac18\right]\times\left[0,\frac18\right],
\end{align*}\\
\[
\mathcal B(F,Q_{111})=
\begin{pmatrix}
	0 & 0 & \frac{7}{3} & \frac{439}{64} & \frac{431}{32}\\[5mm]
	0 & -\frac{17}{24} & \frac{65}{72} & \frac{7225}{1536} & \frac{2713}{256}\\[5mm]
	\frac{32}{15} & \frac{661}{960} & \frac{23693}{15360} & \frac{2197}{480} & \frac{59645}{6144}\\[5mm]
	\frac{4067}{640} & \frac{21231}{5120} & \frac{1034701}{245760} & \frac{2109869}{327680} & \frac{88153}{8192}\\[5mm]
	\frac{6049}{480} & \frac{944287}{98304} & \frac{2174381}{245760} & \frac{13412569}{1310720} & \frac{26941817}{1966080}\\[5mm]
	\frac{255581}{12288} & \frac{556783}{32768} & \frac{72585779}{4718592} & \frac{16662671}{1048576} & \frac{58100225}{3145728}\\[5mm]
	\frac{8089155}{262144} & \frac{54984213}{2097152} & \frac{99545727}{4194304} & \frac{195731055}{8388608} & \frac{209583495}{8388608}
\end{pmatrix}.
\]
Thus, we have
\[
\min \mathcal B(F,Q_{111})=-\frac{17}{24}.
\]
For $Q_{112},$ where
\begin{align*}
	Q_{112}=\left[0,\frac18\right]\times\left[\frac18,\frac14\right],
\end{align*}\\
\[
\mathcal B(F,Q_{112})=
\begin{pmatrix}
	\frac{431}{32} & \frac{1285}{64} & \frac{1381}{48} & \frac{631}{16} & 52\\[5mm]
	\frac{2713}{256} & \frac{25331}{1536} & \frac{28199}{1152} & \frac{4411}{128} & \frac{371}{8}\\[5mm]
	\frac{59645}{6144} & \frac{227921}{15360} & \frac{338927}{15360} & \frac{80139}{2560} & \frac{10201}{240}\\[5mm]
	\frac{88153}{8192} & \frac{4942371}{327680} & \frac{2641727}{122880} & \frac{2452339}{81920} & \frac{103317}{2560}\\[5mm]
	\frac{26941817}{1966080} & \frac{67529561}{3932160} & \frac{7447817}{327680} & \frac{29796421}{983040} & \frac{1225589}{30720}\\[5mm]
	\frac{58100225}{3145728} & \frac{66212437}{3145728} & \frac{121259051}{4718592} & \frac{4243233}{131072} & \frac{1345615}{32768}\\[5mm]
	\frac{209583495}{8388608} & \frac{223435935}{8388608} & \frac{127250607}{4194304} & \frac{75626793}{2097152} & \frac{11484225}{262144}
\end{pmatrix}.
\]
Thus, we have
\[
\min \mathcal B(F,Q_{112})=\frac{59645}{6144}.
\]
For $Q_{113},$ where
\begin{align*}
	Q_{113}=\left[\frac18,\frac14\right]\times\left[0,\frac18\right],
\end{align*}
\[
\mathcal B(F,Q_{113})=
\begin{pmatrix}
	\frac{8089155}{262144} & \frac{54984213}{2097152} & \frac{99545727}{4194304} & \frac{195731055}{8388608} & \frac{209583495}{8388608}\\[5mm]
	\frac{16088873}{393216} & \frac{37167157}{1048576} & \frac{605568427}{18874368} & \frac{129080371}{4194304} & \frac{396349585}{12582912}\\[5mm]
	\frac{51939757}{983040} & \frac{73158895}{1572864} & \frac{1993607707}{47185920} & \frac{419448987}{10485760} & \frac{416937499}{10485760}\\[5mm]
	\frac{2180003}{32768} & \frac{77773049}{1310720} & \frac{28389403}{524288} & \frac{267038767}{5242880} & \frac{260564127}{5242880}\\[5mm]
	\frac{6708687}{81920} & \frac{145129691}{1966080} & \frac{798453347}{11796480} & \frac{499235521}{7864320} & \frac{160493139}{2621440}\\[5mm]
	\frac{2428585}{24576} & \frac{17664895}{196608} & \frac{32542313}{393216} & \frac{60985241}{786432} & \frac{58387873}{786432}\\[5mm]
	\frac{480003}{4096} & \frac{3516549}{32768} & \frac{6505143}{65536} & \frac{12193311}{131072} & \frac{11621391}{131072}
\end{pmatrix}.
\]
Thus, we have
\[
\min \mathcal B(F,Q_{113})=\frac{195731055}{8388608}.
\]
For $Q_{114},$ where
\[
Q_{114}=\left[\frac18,\frac14\right]\times\left[\frac18,\frac14\right],
\]
\[
\mathcal B(F,Q_{114})=
\begin{pmatrix}
	\frac{209583495}{8388608} & \frac{223435935}{8388608} & \frac{127250607}{4194304} & \frac{75626793}{2097152} & \frac{11484225}{262144}\\[5mm]
	\frac{396349585}{12582912} & \frac{405458057}{12582912} & \frac{660219259}{18874368} & \frac{41680929}{1048576} & \frac{6101765}{131072}\\[5mm]
	\frac{416937499}{10485760} & \frac{414426011}{10485760} & \frac{1948400923}{47185920} & \frac{354397343}{7864320} & \frac{50008423}{983040}\\[5mm]
	\frac{260564127}{5242880} & \frac{254089487}{5242880} & \frac{25799547}{524288} & \frac{68079949}{1310720} & \frac{1858157}{32768}\\[5mm]
	\frac{160493139}{2621440} & \frac{463723313}{7864320} & \frac{691916723}{11796480} & \frac{39525309}{655360} & \frac{15727231}{245760}\\[5mm]
	\frac{58387873}{786432} & \frac{18596835}{262144} & \frac{9115859}{131072} & \frac{13782067}{196608} & \frac{595285}{8192}\\[5mm]
	\frac{11621391}{131072} & \frac{11049471}{131072} & \frac{5361303}{65536} & \frac{2661729}{32768} & \frac{338553}{4096}
\end{pmatrix}.
\]
Thus, we have
\[
\min \mathcal B(F,Q_{114})=\frac{209583495}{8388608}.
\]
Therefore, after subdividing \(Q_{11}\) into four smaller rectangles, the minimum Bernstein coefficient on each one is
\[
Q_{111}:\ -\frac{17}{24},\qquad
Q_{112}:\ \frac{59645}{6144},\qquad
Q_{113}:\ \frac{195731055}{8388608},\qquad
Q_{114}:\ \frac{209583495}{8388608}.
\]
So Bernstein proves
\[
F(p_1,x)>0
\]
on \(Q_{112}\), \(Q_{113}\), and \(Q_{114}\).
The only unresolved rectangle after this third subdivision is
\[
Q_{111}=\left[0,\frac18\right]\times\left[0,\frac18\right],
\]
because there the minimum Bernstein coefficient is still negative, precisely
$
-{17}/{24}.
$
So at this stage, subdivision plus Bernstein settles every part except the tiny corner square \(Q_{111}\). That last square must be handled by a direct local estimate for \(F=1024-G_1\).\\[2mm]
On the last square
\[
Q_{111}=\left[0,\frac18\right]\times\left[0,\frac18\right],
\]
set
\[
F(p,x):=1024-G_1(p,x), \qquad p:=p_1.
\]
We prove directly that
\[
F(p,x)\ge 0 \qquad \text{for } (p,x)\in Q_{111},
\]
and in fact
\[
F(p,x)>0 \qquad \text{for } (p,x)\neq(0,0).
\]
First expand $F$ such that 
\[
\begin{aligned}
	F(p,x)
	&=-61p^6+464p^5-1024p^4-464p^3+2048p^2 \\
	&\quad+\left(244p^6+640p^5+620p^4+448p^3-864p^2-1088p\right)x \\
	&\quad+\left(-248p^6-720p^5+1856p^4+976p^3-2504p^2-256p+896\right)x^2 \\
	&\quad+\left(64p^6-640p^5-416p^4-448p^3+640p^2+1088p-288\right)x^3 \\
	&\quad+\left(-128p^6+256p^5+384p^4-512p^3-384p^2+256p+128\right)x^4.
\end{aligned}
\]
Now separate the quadratic part of $p$
\[
F(p,x)=Q(p,x)+R(p,x),
\]
where
\[
Q(p,x)=2048p^2-1088px+896x^2,
\]
and
\[
\begin{aligned}
	R(p,x)
	&=-61p^6+464p^5-1024p^4-464p^3 \\
	&\quad+\left(244p^6+640p^5+620p^4+448p^3-864p^2\right)x \\
	&\quad+\left(-248p^6-720p^5+1856p^4+976p^3-2504p^2-256p\right)x^2 \\
	&\quad+\left(64p^6-640p^5-416p^4-448p^3+640p^2+1088p-288\right)x^3 \\
	&\quad+\left(-128p^6+256p^5+384p^4-512p^3-384p^2+256p+128\right)x^4.
\end{aligned}
\]
Every monomial in \(R\) has total degree at least \(3\).
Now estimate \(Q\) from below. Since
\[
2px\le p^2+x^2,
\]
we get
\[
1088px\le 544p^2+544x^2.
\]
Therefore
\[
Q(p,x)\ge (2048-544)p^2+(896-544)x^2
=1504p^2+352x^2
\ge 352(p^2+x^2).
\]
Next estimate \(R\) on \(Q_{111}\), where
\[
0\le p\le \frac18,\qquad 0\le x\le \frac18.
\]
Write
\[
R(p,x)=\sum_{i+j\ge 3} c_{ij}p^ix^j.
\]
For each monomial \(p^ix^j\) with \(i+j\ge 3\), we have
\[
p^ix^j\le 8^{-(i+j-2)}(p^2+x^2).
\]
Indeed:
if $i \ge 2$, then
\[
p^ix^j=p^2p^{\,i-2}x^j\le 8^{-(i+j-2)}p^2\le 8^{-(i+j-2)}(p^2+x^2),
\]
if $i=1$, then $j\ge 2$, so
\[
p x^j=x^2(px^{j-2})\le 8^{-(j-1)}x^2=8^{-(i+j-2)}x^2\le 8^{-(i+j-2)}(p^2+x^2),
\]
if $i=0$, then $j\ge 3$, so
\[
x^j\le 8^{-(j-2)}x^2=8^{-(i+j-2)}x^2\le 8^{-(i+j-2)}(p^2+x^2).
\]
Hence
\[
|R(p,x)|
\le
\left(\sum_{i+j\ge 3}|c_{ij}|\,8^{-(i+j-2)}\right)(p^2+x^2).
\]
Now compute that coefficient sum explicitly from the coefficients of \(R\):
\[
\begin{aligned}
	\sum_{i+j\ge 3}|c_{ij}|\,8^{-(i+j-2)}
	&=
	\frac{61}{4096}
	+\frac{464}{512}
	+\frac{1024}{64}
	+\frac{464}{8}
	+\frac{244}{512}
	+\frac{640}{64}
	+\frac{620}{8}
	+\frac{448}{1}
	+\frac{864}{8}
	\\
	&\quad
	+\frac{248}{64}
	+\frac{720}{8}
	+\frac{1856}{1}
	+\frac{976}{1}
	+\frac{2504}{8}
	+\frac{256}{1}
	+\frac{64}{8}
	+\frac{640}{1}
	+\frac{416}{1}
	+\frac{448}{1}
	\\
	&\quad
	+\frac{640}{8}
	+\frac{1088}{1}
	+288
	+128
	+\frac{256}{8}
	+\frac{384}{1}
	+\frac{512}{1}
	+\frac{384}{8}
	+256
	+128
	\\
	&=\frac{42177473}{131072}.
\end{aligned}
\]
Therefore
\[
|R(p,x)|\le \frac{42177473}{131072}(p^2+x^2).
\]
Combining the two estimates,
\[
\begin{aligned}
	F(p,x)
	&=Q(p,x)+R(p,x)\\
	&\ge 352(p^2+x^2)-\frac{42177473}{131072}(p^2+x^2)\\
	&=\frac{3959871}{131072}(p^2+x^2).
\end{aligned}
\]
Since
\[
\frac{3959871}{131072}>0,
\]
we conclude that
\[
F(p,x)\ge 0 \qquad \text{on }Q_{111},
\]
and in fact
\[
F(p,x)>0 \qquad \text{for }(p,x)\neq(0,0).
\]
Thus
\[
1024-G_1(p_1,x)\ge 0
\qquad\text{on}\qquad
\left[0,\frac18\right]\times\left[0,\frac18\right].
\]
Together with the Bernstein positivity on all the other subrectangles, this completes the proof that
\[
1024-G_1(p_1,x)\ge 0
\qquad\text{for all }(p_1,x)\in[0,1]\times[0, 1],
\]
with equality only at
\[
(p_1,x)=(0,0).
\]
So finally,
\[
G_1(p_1,x)\le 1024
\quad\text{on}\quad [0,1]\times[0, 1],
\]
and the maximum is \(1024\), attained only at \((0,0)\).\\[2mm]
Now we deal with the second function $G_{2}(p_1, x)$.\\[2mm]
Let
\[
F(p_1,x):=G_2(p_1,x),
\qquad 0\le p_1\le 1,\quad 0\le x\le 1.
\]
For simplicity, write $p=p_{1}$. Then
\[
\begin{aligned}
	F(p,x)=\,&61 p^6 + 244 p^4 (1-p^2)x + 8p^2(89-120p^2+31p^4)x^2 \\
	&-32(-9-7p^2+14p^4+2p^6)x^3 + 128p^2(1-p^2)^2x^4 \\
	&+288(1-p^2)\bigl(3p^2+4(1-p^2)x\bigr)(1-x^2) \\
	&+16p(1-p^2)(1-x^2)\bigl(29p^2+(68+40p^2)x+16(1-p^2)x^2\bigr).
\end{aligned}
\]
After expanding and collecting like terms, we obtain
\[
\begin{aligned}
	F(p,x)=\,&128p^{6}x^{4}-64p^{6}x^{3}+248p^{6}x^{2}-244p^{6}x+61p^{6} \\
	&-256p^{5}x^{4}+640p^{5}x^{3}+720p^{5}x^{2}-640p^{5}x-464p^{5} \\
	&-256p^{4}x^{4}-1600p^{4}x^{3}-96p^{4}x^{2}+1396p^{4}x-864p^{4} \\
	&+512p^{3}x^{4}+448p^{3}x^{3}-976p^{3}x^{2}-448p^{3}x+464p^{3} \\
	&+128p^{2}x^{4}+2528p^{2}x^{3}-152p^{2}x^{2}-2304p^{2}x+864p^{2} \\
	&-256px^{4}-1088px^{3}+256px^{2}+1088px \\
	&-864x^{3}+1152x.
\end{aligned}
\]
This is a polynomial of bidegree $(6,4)$. Hence on the unit square $[0,1]\times[0, 1]$ it can be written in the Bernstein basis as
\[
F(p,x)=\sum_{i=0}^{6}\sum_{j=0}^{4} b_{ij}\,B_i^6(p)\,B_j^4(x),
\]
where
\[
B_i^6(p)=\binom{6}{i}p^i(1-p)^{6-i},
\qquad
B_j^4(x)=\binom{4}{j}x^j(1-x)^{4-j}.
\]
For the whole rectangle \([0,1]\times[0,1]\), the Bernstein coefficients are\\
\[
\bigl(b_{ij}\bigr)=
\begin{pmatrix}
	0 & 288 & 576 & 648 & 288\\[5mm]
	0 & \dfrac{1000}{3} & \dfrac{6064}{9} & 760 & 288\\[5mm]
	\dfrac{288}{5} & \dfrac{5968}{15} & \dfrac{2252}{3} & \dfrac{12772}{15} & \dfrac{5384}{15}\\[5mm]
	196 & \dfrac{2496}{5} & \dfrac{12158}{15} & 910 & \dfrac{2504}{5}\\[5mm]
	\dfrac{1904}{5} & \dfrac{3103}{5} & \dfrac{7606}{9} & \dfrac{13583}{15} & \dfrac{9284}{15}\\[5mm]
	\dfrac{1328}{3} & 607 & \dfrac{2170}{3} & \dfrac{2159}{3} & 524\\[5mm]
	61 & 61 & 61 & 61 & 61
\end{pmatrix}.
\]\\
Now we use the basic Bernstein-basis enclosure property: if
\[
F(p,x)=\sum_{i=0}^{6}\sum_{j=0}^{4} b_{ij}\,B_i^6(p)\,B_j^4(x)
\qquad \text{on }\quad  [0,1]\times[0, 1],
\]
then
\[
\min_{i,j} b_{ij}\le F(p,x)\le \max_{i,j} b_{ij}
\qquad \text{for all } (p,x)\in[0,1]\times[0, 1].
\]
From the coefficient matrix above, the largest Bernstein coefficient is
\[
\max_{0\le i\le 6,\,0\le j\le 4} b_{ij}=910.
\]
Therefore,
\[
F(p,x)\le 910
\qquad \text{for all } (p,x)\in[0,1]\times[0, 1].
\]
Equivalently,
\begin{align*}
	G_2(p_1,x)\le 910
	\qquad \text{for all } 0\le p_1\le 1,\,0\le x\le 1.
\end{align*}
Thus we obtain the desired upper bound
\[
\max_{0\le p_1\le 1,\,0\le x\le 1} G_2(p_1,x)\le 910.
\]
Combining both cases we obtain that 
\[
H_{1}(p_1, x, y)\le 1024 \quad \text{over} \quad \mathcal{D}.
\]
Therefore 
\[
|H_{3}(1)|\le \frac{1024}{9216}=\frac{1}{9}.
\]
Finally, the estimate is sharp. Taking $w(z)=z^3$ in the defining subordination,
\[
\frac{z f'(z)}{f(z)}=\left(1+\frac{z^3}{2}\right)^2,
\]
and integrating gives
\[
f(z)=z\exp\!\left(\int_0^z \frac{(1+t^3/2)^2-1}{t}\,dt\right)
=z\exp\!\left(\frac{z^3}{3}+\frac{z^6}{24}\right).
\]
For this function, $a_2=a_3=a_5=0$ and $a_4=1/3$, hence
\[
H_3(1)=-a_4^2=-\frac19,
\]
Therefore, we have $|H_3(1)|=1/9$. This completes the proof.
\end{proof}
\vspace{4mm}
\noindent\textbf{Compliance of Ethical Standards}\\[2mm]
\noindent\textbf{Conflict of Interest:}
The authors declare that there is no conflict of interest regarding the publication of this paper.\\[2mm]
\noindent\textbf{Data Availability Statement:}
Data sharing is not applicable to this article as no datasets were generated or analyzed during the current study.\\[2mm]
\noindent\textbf{Authors' Contributions:}
Both authors have made equal contributions in reading, writing, and preparing the manuscript.\\[1mm]
\noindent\textbf{Acknowledgment:}
The research of the second-named author is supported by CSIR-JRF, New Delhi, India.

\end{document}